\newcounter{mparcnt}
\declaretheorem[name=Theorem,numberwithin=section]{thm}
\declaretheorem[name=Remark,numberwithin=section,style=remark,sibling=thm]{rem}
\declaretheorem[name=Lemma,sibling=thm]{lemma}
\declaretheorem[name=Proposition,numberwithin=section,sibling=thm]{prop}
\declaretheorem[name=Definition,style=definition,numberwithin=section,sibling=thm]{defn}
\declaretheorem[name=Corollary,numberwithin=section,sibling=thm]{cor}
\numberwithin{equation}{section}
\newcommand{\wt}{\widetilde}
\newcommand{\wh}{\widehat}
\newcommand{\cn}{\colon}
\newcommand{\sub}{\subset}
\newcommand{\ov}{\overline}
\newcommand{\bbR}{\mathbb{R}}
\newcommand{\al}{\alpha}
\newcommand{\be}{\beta}
\newcommand{\ga}{\gamma}
\newcommand{\ep}{\epsilon}
\newcommand{\si}{\sigma}
\newcommand{\Si}{\Sigma}
\newcommand{\Om}{\Omega}
\newcommand{\cC}{\mathcal{C}}
\newcommand{\del}{\partial}
\newcommand{\n}{\nabla}
\newcommand{\rt}{\sqrt}
\newcommand{\ho}{\left(\frac{d}{dt} -\Delta \right)}
\newcommand{\ip}[2]{\left\langle #1,#2 \right\rangle}
\newcommand{\fr}[2]{\frac{#1}{#2}}
\newcommand{\pf}[1]{\begin{proof}#1 \end{proof}}
\newcommand{\eq}[1]{\begin{equation}\begin{alignedat}{2} #1 \end{alignedat}\end{equation}}
\newcommand{\br}[1]{\left(#1\right)}
\newcommand{\abs}[1]{\lvert #1\rvert}
\newcommand{\enum}[1]{\begin{enumerate}[(i)] #1 \end{enumerate}}
\newcommand{\ra}{\rightarrow}
\newcommand{\mrm}{\mathrm}
\begin{document}
\title[Spacelike MCF with constant angle in a cone]{A capillary problem for spacelike mean curvature flow in a cone of Minkowski space}
\author{Wilhelm Klingenberg}
\address{Department of Mathematical Sciences, Durham University Upper Mountjoy Campus, Stockton Rd, Durham DH1 3LE, UK}
\email{\href{mailto:wilhelm.klingenberg@durham.ac.uk}{wilhelm.klingenberg@durham.ac.uk}}
\author{Ben Lambert}
\address{School of Mathematics, The University of Leeds, Leeds, LS2 9JT, UK}
\email{\href{mailto:b.s.lambert@leeds.ac.uk}{b.s.lambert@leeds.ac.uk}}
\author{Julian Scheuer}
\address{Goethe-Universit\"at Frankfurt am Main, Institut f\"ur Mathematik, Robert-Mayer-Str.~10, 60325 Frankfurt, Germany} 
\email{\href{mailto:scheuer@math.uni-frankfurt.de}{scheuer@math.uni-frankfurt.de}}
\date{\today}
\keywords{Spacelike mean curvature flow; Capillary boundary condition}
\subjclass[2020]{53E10; 35R35}

\begin{abstract}
 Consider a convex cone in three-dimensional Minkowski space which either contains the lightcone or is contained in it. This work considers mean curvature flow of a proper spacelike strictly mean convex disc in the cone which is graphical with respect to its rays. Its boundary is required to have constant intersection angle with the boundary  of the cone. We prove that the corresponding parabolic boundary value problem for the graph admits a solution for all time which rescales to a self-similarly expanding solution.
\end{abstract}

\maketitle

\section{Introduction}

We study the capillary problem for mean curvature flow of spacelike surfaces $M_t$ with free boundary on a non-degenerate (i.e. Riemannian or Lorentzian) surface $\Si$ in Minkowski space. It is well known that spacelike mean curvature flow with gradient estimates is well behaved, see for example \cite{Bartnik:/1984,Ecker:/1993,Ecker:/1997,EckerHuisken:/1991,LambertLotay:02/2021}. One particularly intriguing class of  capillary boundary conditions is therefore given by $\Si$ being a Riemannian submanifold, as this immediately implies uniform spacelikeness at the boundary (depending only on barriers), which in turn yields global gradient estimates in the compact case. However, as we shall see, this necessarily leads to other issues such as the possibility of boundary collapse. In this paper we deal with the capillary problem when $\Si$ is the boundary of any convex cone with a nondegenerate induced metric. This problem carries with it several technical difficulties: Typically a surface being spacelike implies that the surface is graphical -- this is not true in the case $\Si$ is a Riemannian cone, but a property that we must show is preserved along the flow. A second difficulty is the appearance of unwanted curvature terms in boundary derivatives of first order, terms which are present due to the non-perpendicular boundary condition. We deal with this by exploiting two-dimensional techniques and good bounds on the mean curvature.

Given any such cone $\Si$, mean curvature flow with a capillary boundary condition is given by the following PDE 
\eq{
\label{eq:MCFBC}
\begin{cases}
(\del_{t}x)^\perp=\vec{H} = H\nu & \text{on }\Om\times[0,T)\\
x(\cdot,0) = x_0(\cdot)&\text{in } \Om\\
x(\xi,t)\in \Sigma &\text{for } (\xi,t)\in \partial \Om\times[0,T)\\
-\ip{\nu(\xi,t)}{\mu(x(\xi,t))}=\alpha & \text{for } (\xi,t)\in \partial \Om\times[0,T),
\end{cases}
}
for some fixed $\al\in\bbR$.  Furthermore, $\mu$ is the future-directed normal to $\Si$, see \eqref{mu}.
Let $\cC\in \bbR^3_1$ be an open convex cone such that $\Si = \del\cC$. Our convex cone $\cC$ arises from the following construction.
Let $\Om\sub \bbR^2$ be a convex, bounded and open domain containing the origin. We may also view $\Om$, without renaming it, as a subset of $\bbR^{3}_1$ within the slice $\{x^{3}=1\}$. Then $\Om$ generates an open convex cone $\cC$ in $\bbR^{3}_1$ by radial extension with apex being the origin of $\bbR^{3}_1$, i.e. $\Om$ and $\cC$ are related via
\eq{\Om = \cC\cap \{x\in \bbR^{3}_{1}\cn \ip{x}{e_{3}}=-1\}.}

The boundary condition in \eqref{eq:MCFBC} has a special name.

\begin{defn}
For $\al\in \bbR$ when $\Si$ is Lorentzian and $\al>1$ when $\Si$ is Riemannian, we say that a surface $M\sub \cC$ with boundary $\del M$ is $\al$-{\it{capillary}}, if $\del M\sub\Si$ and along $\del M$ there holds
\eq{-\ip{\nu}{\mu} = \al.}
\end{defn} 

Finally, we say that $M$ is {\it graphical}, if we can parametrise $M$ via its graph function 
\eq{u\cn\Omega\ra \bbR,}
so that the embedding $x$ of $\Om$ into $\cC$ is given by
\eq{x(\xi)=u(\xi)(\xi+e_{3}).\label{eq:graphicalx}}
Here we identify $\xi \in \Om  \subset  \bbR^2 \times \{1\} \sim \bbR^2 \times \{0\} \subset \bbR^{3}_1,$ so that $\ip{\xi}{e_3}=0$.

Our main result is the following.
\begin{thm}\label{thm:maintheorem}
Suppose that $\cC\subset{\mathbb{R}^3_1}$ is a strictly convex cone with nondegenerate boundary $\Si$ and suppose that $M_0\subset \mathcal{C}$ is $\al$-capillary, spacelike, graphical and strictly mean-convex. 
Then: 
\begin{enumerate}
\item A solution of \eqref{eq:MCFBC} exists for all time and leaves any compact subset of ${\mathbb{R}^3_1}$ for large $t$. 
\item We define the rescaled flow to be $\wt{M}_t:=(1+t)^{-\frac 1 2}M_t$. The rescaled solution stays in a bounded region and converges uniformly to a piece of an expanding solution to MCF satisfying the boundary conditions as $t \ra \infty$.
\end{enumerate}
\end{thm}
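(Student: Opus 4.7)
My plan is to first rewrite the geometric problem \eqref{eq:MCFBC} as a scalar parabolic equation for the graph function $u$ using \eqref{eq:graphicalx}. Spacelikeness of $M_t$ translates to a pointwise bound on $|\nabla u|$ strictly below a geometric threshold, and the capillary condition $-\langle \nu,\mu\rangle = \alpha$ becomes a nonlinear oblique boundary condition on $\partial\Omega$. Short-time existence then follows from standard parabolic theory for nonlinear oblique boundary value problems on convex planar domains. The core of the argument is to produce a priori estimates uniform in time; long-time existence and part~(1) then follow by a continuation argument.

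For the $C^0$ estimate I would construct barriers by solving the elliptic problem for $\alpha$-capillary self-similar expanders, i.e.\ surfaces satisfying $H + \tfrac{1}{2}\langle x,\nu\rangle = 0$ together with the capillary angle condition on $\Sigma$. After suitable dilation, $M_0$ can be sandwiched between two such expanders, and the avoidance principle for the capillary flow traps $(1+t)^{-1/2}M_t$ between them for all $t$. This simultaneously yields a $C^0$ bound on the rescaled flow and shows that $M_t$ leaves every compact set.

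The central difficulty is preserving graphicality and uniform spacelikeness. Since the capillary angle is non-perpendicular, an oblique boundary term appears when one tries to run the maximum principle on a gradient quantity such as $v = -\langle \nu,e_3\rangle$. My approach is to compute $\partial_\mu v$ along $\partial\Omega$ using the capillary condition together with the strict convexity of $\cC$ and the fact that $\Sigma$ is a cone, so that only terms proportional to $H$ remain on the boundary. The maximum principle for $H$ will simultaneously preserve strict mean-convexity, and in two dimensions the oblique boundary derivative of $H$ is controlled by the second fundamental form of $\Sigma$. Combining both inequalities in a single test quantity of the form $v\,\varphi(H)$ for a suitable positive function $\varphi$ should yield both uniform spacelikeness and a positive lower bound on $H$. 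This is the step I expect to be the main obstacle, and it is where two-dimensionality of the problem and strict convexity of $\cC$ are crucial.

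Once $C^0$, $C^1$ and mean-convexity bounds are available, higher-order estimates follow from Krylov--Safonov and parabolic Schauder theory for oblique boundary problems, giving long-time existence. For part~(2) I would pass to the rescaled flow $\tilde M_t = (1+t)^{-1/2}M_t$, which in logarithmic time $s=\log(1+t)$ satisfies a parabolic equation whose stationary solutions are precisely the $\alpha$-capillary expanders. The barrier trap plus uniform higher-order estimates on $\tilde M_t$ give smooth subsequential convergence to an expander $\tilde M_\infty$. Uniqueness of the full limit can then be argued either through a Huisken-type monotonicity formula in the rescaled variables, or by exploiting strict convexity of $\cC$ and the preserved mean-convexity to rule out oscillation between distinct expanders.
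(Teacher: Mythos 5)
Your high-level framework (scalar oblique parabolic PDE, a priori $C^0$/$C^1$/mean-convexity bounds, Krylov--Safonov + Schauder bootstrap, continuation, rescaled convergence) matches the paper's skeleton, but several of your key steps would not close as proposed, and the central structural observation of the paper is missing.

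\textbf{Barriers and $C^0$.} Your $C^0$ estimate rests on sandwiching $M_0$ between two $\alpha$-capillary self-similar expanders. The existence of such expanders for the given cone and angle is not available a priori: in fact it is essentially part of what Theorem~\ref{thm:maintheorem}(2) ultimately produces, so invoking them at the start is circular. The paper avoids this entirely. It instead proves the two-sided speed bound
\[
2\bigl(c_{S/H}+t\bigr)\ \le\ \frac{S}{H}\ \le\ 2\bigl(C_{S/H}+t\bigr),
\]
via the quantity $S/H-2t$, which satisfies a clean heat-type equation \emph{and} a homogeneous Neumann condition at $\partial M_t$, because the boundary identities of \autoref{lem:1sttime} and \autoref{lem:Sboundary} show $H$ and $S$ obey the \emph{same} linear oblique boundary condition. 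This is the key structural fact you do not use. Combined with $\partial_t\rho = H/S$, it immediately gives the height bound on the rescaled flow (\autoref{lem:tildeubound}) with no barriers, and later the exponential decay $|\wt H/\wt S - 1/2|\le Ce^{-\tau}$ (\autoref{lem:HSimprovement}), which drives the full (not merely subsequential) convergence; a Huisken-type monotonicity formula for this capillary problem is not available.

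\textbf{The test quantity $v\,\varphi(H)$.} The boundary derivative of $v$ (\autoref{lem:vboundary}) contains the term $(\langle e_3,\mu\rangle + \alpha v)\,h(\mu^\top/|\mu^\top|,\mu^\top/|\mu^\top|)$, and $h(\mu^\top,\mu^\top)$ is not $H$; no choice of $\varphi$ makes the boundary derivative of $v\,\varphi(H)$ have a sign. The paper closes this loop differently: at a boundary extremum of the relevant ratio, the vanishing of the \emph{tangential} derivative along $\partial M_t$ yields an algebraic relation between $h(\gamma,\gamma)$ and $\widehat h(\gamma,\nu^\Sigma)$, which together with $H = h(\gamma,\gamma)+h(\mu^\top,\mu^\top)$ lets one replace $h(\mu^\top,\mu^\top)$ by $H$ plus controllable $\widehat h$-terms, and then $H$ is controlled by the speed bound above. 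This two-dimensional maximality trick (cf.\ \cite{AltschulerWu1994,Lambert2019}) is the technical core and your proposal does not have a substitute for it.

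\textbf{Wrong target quantity, and the case split.} You focus on bounding $v$ and keeping $H>0$. But the hard estimate depends on the signature of $\Sigma$: if $\Sigma$ is Riemannian, spacelikeness ($v$ bounded) is nearly automatic at the boundary by a short Cauchy--Schwarz argument (\autoref{lem:presspacelike}), and the genuine difficulty is \emph{graphicality}, i.e.\ a quantitative lower bound on $S$ (achieved by bounding $S/v$ from below, Section~\ref{sec:SpacelikeCone}); if $\Sigma$ is Lorentzian, graphicality is automatic ($S\ge\|x\|>0$) and the genuine difficulty is the \emph{upper} bound on $S$, i.e.\ uniform spacelikeness (Section~\ref{sec:LorentzCone}). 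Your plan addresses neither the lower bound on $S$ nor the case distinction, so even granted all your other steps, uniform parabolicity of the scalar PDE (\autoref{prop:parabolicityobliqueness}) would not be established and the continuation argument would fail.
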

\begin{rem}
\enum{
\item We require $\al\geq 1$ in the Riemannian case, since by properties of future directed timelike vectors $\alpha=-\ip{\nu}{\mu}\geq 1$. In fact, for this to be an oblique boundary condition for the PDE we require the strict inequality, see \autoref{lem:obliquerho}.
\item Note that in the above not every $\alpha$ admits mean convex data in the Lorentzian case. However, it can easily be seen that for $\alpha<0$ such initial data always exists.
\item For a full treatment of regularity at the initial time see \autoref{thm:maintheoremPDE}.
}

\end{rem}

Mean curvature flow in semi-Riemannian manifolds has seen a great deal of interest over the years, with applications in producing prescribed mean curvature hypersurfaces in the works of K. Ecker and G Huisken \cite{EckerHuisken:/1991} and C. Gerhardt \cite{Gerhardt:09/2000} and in producing homotopies between manifolds by G. Li and I. Salavessa \cite{LiSalavessa:/2011}. A variant of mean curvature flow constrained to lie inside null hypersurfaces has recently been used by H. Roesch and the third author as a method to find MOTS in general relativity \cite{RoeschScheuer:02/2022}, with an interesting connection to Yamabe-flow \cite{Wolff2023}.

Mean curvature flow with boundary conditions in semi-Riemannian spaces has been considered by a range of authors such as K. Ecker \cite{Ecker:/1997}, the second author \cite{LambertOblique2012,Lambert2017} and S. Gao, G. Li and C. Wu \cite{GaoLiWu2014}. In particular, the case of a perpendicular boundary condition on a Lorentzian cone was proven in all dimensions by the second author \cite{Lambert2014} (equivalent to $\al=0$ in the present paper). Similar perpendicular timelike boundary conditions have also been considered for a mean curvature type flow by F. Guo, G. Li and C. Wu \cite{GuoLiChuanxi2014}. In the Riemannian setting a wide range of boundary problems have been considered for mean curvature flow and we do not give a full bibliography here but mention the works of G. Huisken \cite{Huisken:02/1989}, A. Stahl \cite{Stahl1996}, A. Freire \cite{Freire2014}, B. Guan \cite{Guan1996}, V. Wheeler \cite{Wheeler2017},  and N. Edelen, R. Haslhofer, M. Ivaki and J. Zhu \cite{EdelenHaslhoferIvakiZhu2022}.  


As mentioned in the introduction, one might hope for good behaviour from spacelike mean curvature flow with uniform spacelike estimates (e.g. by applying interior estimates such as in \cite{Ecker:/1997, LambertLotay:02/2021}). However, we note that in the case that $\Si$ is Riemannian, even with uniform spacelike estimates, boundary singularities can occur via boundary collapse. Indeed, the self-shrinker examples of Halldorsson \cite[Theorem 7.1]{Halldorsson2015} provide self shrinkers satisfying capillary boundary conditions with $\Sigma$ the $x$-axis in $\mathbb{R}^2_{1}$ (for all angles) and form singularities, with the whole boundary shrinking to a point. We expect similar rotationally symmetric self shrinkers to exist in $\mathbb{R}^3_1$ indicating the necessity of the strict convexity assumption on $\Sigma$.

The method of proof is to prove uniform graphicality and spacelikeness estimates, which ultimately allow us to rewrite  \eqref{eq:MCFBC} as a uniformly parabolic PDE with an oblique nonlinear boundary condition, see \eqref{eq:MCFrescaledrho}. The proof is different in the cases that $\Si$ is either Riemannian or Lorentzian. In fact, the case that $\Si$ is Riemannian requires significantly more work to prove graphicality estimates, while uniform spacelikeness follows more or less for free as mentioned above. The major difficulty in both cases is to get boundary estimates and these are obtained using estimates on the mean curvature and careful use of the maximality condition as in \cite{AltschulerWu1994,Lambert2019}.

In section \ref{sec:GeometricQuantities} we describe this reduction and give sufficient conditions for obliqueness and parabolicity, in  \autoref{prop:parabolicityobliqueness}. In section \ref{sec:BoundaryIdentities} we collect the required boundary identities from an analysis of our boundary condition. In section \ref{sec:InitialEstimates} we prove initial estimates which are true in both the case that $\Sigma$ is Lorentzian or Riemannian, including height estimates and a speed limit. Our proof then splits into two cases:
\begin{itemize}
\item If $\Sigma$ is Riemannian: A uniform spacelikeness bound follows immediately in \autoref{lem:presspacelike}, so the key estimate required is a uniform graphicality estimate. This is completed in section \ref{sec:SpacelikeCone}.
\item If $\Sigma$ is Lorentzian: The flow is automatically graphical, and we require a uniform spacelikeness estimate. This is proven in section \ref{sec:LorentzCone}.
\end{itemize}  
Finally, we prove the full theorem in section \ref{sec:ProofOfTheorem}, see \autoref{thm:maintheoremPDE} for full details.

\subsection*{Acknowledgments}
The authors acknowledge Cardiff, Durham and Leeds Universities for hosting visits during which the present work was carried out.

\section{Geometric quantities in cone coordinates}
\label{sec:GeometricQuantities}

\subsection*{Spacelikeness and graphicality}

In this section we investigate the property of being graphical further and deduce formulae for geometric quantities in terms of the graph parametrisation as in \eqref{eq:graphicalx}. 
First of all it is important to note that spacelikeness does not in general imply graphicality if $\Si$ is Riemannian, in contrast to the more common case that the surface is graphical over the flat Euclidean subspace. Later we will show that under mean curvature flow, each of these properties is preserved in a quantitative sense.

For the sake of compressing some formulas it will occasionally be useful to work with 
\eq{\rho = \log u}
instead of $u$.

With the standard basis $(e_{1},e_{2})$ in $\Om$, the
 tangent vectors are
\eq{x_i = ue_i+D_iu(\xi+e_{3}),}
where $D$ is the standard directional derivative in $\Om$,
and in these coordinates the metric induced on $M$ is
\eq{g_{ij} &= u^2 \delta_{ij} +u(\xi_iD_ju+\xi_jD_iu)+D_iuD_ju(|\xi|^2-1)\\
&= e^{2\rho} \left[\delta_{ij} +(\xi_iD_j\rho+\xi_jD_i\rho)+D_i\rho D_j\rho(|\xi|^2-1)\right].
}
Its inverse is given by
\eq{g^{jk} = e^{-2\rho}\left(\delta^{jk} + \frac{D^j\rho D^k\rho +|D\rho|^2\xi^j\xi^k-(1+D\rho\cdot\xi)(\xi^jD^k\rho+\xi^kD^j\rho)}{(1+D\rho\cdot \xi)^2-|D\rho|^2}\right).\label{eq:ginverse}}

By inspection we see that
\eq{\widetilde{\nu}:=Du+(u+\xi\cdot Du)e_{3}}
satisfies 
\eq{\ip{\widetilde{\nu}}{x_i}= D_iu(u+Du\cdot \xi)-D_iu(u+Du\cdot \xi)=0.}
 As at $\xi=0$, $\wt{\nu}$ has nontrivial positive $e_{3}$ part, we see that this is a future directed normal which must be timelike due to the assumption on $M$. Hence we obtain
 \eq{0>\abs{\wt\nu}^{2} = \abs{Du}^{2}-(u+\xi\cdot Du)^{2}}
 and hence 
 \eq{(u+\xi\cdot Du)^2>|Du|^2.}
  This gives us a globally defined unit normal
\eq{\label{nu}\nu = \frac{Du+(u+\xi\cdot Du)e_{3}}{\sqrt{(u+\xi\cdot Du)^2-|Du|^2}}.}
Therefore, the standard measure of spacelikeness satisfies
\eq{1\leq v:=-\ip{\nu}{e_{3}} = \frac{(u+\xi\cdot Du)}{\sqrt{(u+\xi\cdot Du)^2-|Du|^2}}=\frac{1}{\sqrt{1-\frac{|Du|^2}{(u+\xi\cdot Du)^2}}}.}
The support function of a surface $M \subset \bbR^{3}_1 $ is given by 
\eq{S:=-\ip{x}{\nu} = - \frac{u Du\cdot\xi-uDu\cdot\xi-u^2}{\sqrt{(u+\xi\cdot Du)^2-|Du|^2}}=\frac{u^{2}}{\sqrt{(u+\xi\cdot Du)^2-|Du|^2}}.}

The Gaussian formula on $M$ is given by
\eq{\overline{\n}_{X}Y = \n_{X}Y + \mrm{II}(X,Y),}
where $\overline{\nabla}$ is the Levi-Civita connection on $\mathbb{R}^3_1$, and so the second fundamental is given by
\eq{h_{ij} = -\ip{\mrm{II}(\del_{i},\del_{j})}{\nu} = -\ip{D_{ij}x}{\nu}.}
With this convention the mean curvature of $M$ is given by
\eq{H &=-\ip{2g^{ij}D_{j}u e_{i} + g^{ij}D_{ij}u(\xi+e_{3})}{\nu}\\
	&=-\fr{2g^{ij}D_{i}uD_{j}u}{\rt{(u+\xi\cdot Du)^{2}-\abs{Du}^{2}}}+\fr{ug^{ij}D_{ij}u}{\rt{(u+\xi\cdot Du)^{2}-\abs{Du}^{2}}}.}

Geometric quantities on $\Si$ will be furnished by a hat, e.g. the second fundamental form is defined by
\eq{\widehat h_{ij}=\ip{\overline\n_{i}\mu}{y_{j}},}
where $y$ is an embedding of $\Si$, and where the future-directed normal $\mu$ is defined by
\eq{\label{mu}\mu = \fr{N + N\cdot\xi \;\; e_{3}}{\rt{\abs{1-(N\cdot\xi)^{2}}}}.}
Here $N$ is the outward pointing unit normal to $\del\Om\sub\bbR^2$ (and identified with a vector in $\bbR^2 \times \{0\}$),  i.e. $N\cdot\xi$ is the standard Euclidean support function on $\bbR^2$. The normal vector $\mu$ is well defined due to the metric non-degeneracy of $\Si$. To keep track of the signature of $\Si$, we also define
\eq{\si = \ip{\mu}{\mu} = \begin{cases} +1, &\Si~\mbox{is Lorentzian}\\
								-1, &\Si~\mbox{is Riemannian}.
				\end{cases}}
Note also that the sign of $\sigma$ coincides with the sign of $1-(N\cdot\xi)^{2}.$ We next deduce some geometric properties of $\al$-capillary surfaces.

For any $p\in \mathbb{R}_1^3$ and $W\in T_p\mathbb{R}_1^3$, when dealing with semi-Riemannian norms of vectors of unknown causality we write
\[\|W\|^2=||W|^2| = \abs{\ip{W}{W}}.\]

For $p\in \partial M_t$ and a vector $W\in T_p\mathbb{R}^{3}_1$, we will write 
\eq{W^\top=W+\ip{\nu}{W}\nu} for the orthogonal projection of $W$ to $T_p M_t$ and 
\eq{W^\Sigma=W-\sigma\ip{\mu}{W}\mu} for the orthogonal projection of $W$ to $T_p\Sigma$. We will write $W^{M\cap\Si}$ for the projection of $W$ to $T_p \partial M = T_p M_t\cap T_p\Sigma$. 
In particular we have
\begin{align*}
\mu^\top &= \mu - \alpha \nu, & \nu^\Sigma &= \nu +\sigma \alpha \mu,\\
|\mu^\top|^2 &=\al^2+\sigma, &|\nu^\Sigma|^2 &= -\sigma(\al^2+\sigma)=-\si\|\nu^\Si\|^2.
\end{align*}
We note that $\mu^\top$ and $\nu^\Sigma$ are orthogonal to $\partial M_t$ and contained in $TM$ and $T\Sigma$ respectively.

\begin{lemma}
Let $M\sub \cC$ be $\al$-capillary. Then $\mu^\top$ is normal to $\partial M \subset \Si = \partial \cC $  and points out of $\cC$. 
\end{lemma}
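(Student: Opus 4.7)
The plan is to establish the two claims separately: (a) $\mu^\top\perp \partial M$ in $TM$ (equivalently in $T\bbR^3_1$), and (b) the direction of $\mu^\top$ is the outward one relative to $\mathcal{C}$. Both will follow from direct pairings using the capillary identity $\langle\nu,\mu\rangle=-\al$ together with $\mu\perp T\Si$ and $\nu\perp TM$. No serious obstacle is anticipated; the only subtlety is pinning down what ``out of $\mathcal{C}$'' means in the Riemannian case, where the future-directed $\mu$ is in fact the \emph{inward} normal to $\mathcal{C}$.

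First I would note that, by construction from the projection formula $W^\top=W+\langle\nu,W\rangle\nu$, the vector $\mu^\top=\mu-\al\nu$ lies in $T_pM$ for every $p\in\partial M$. For any $X\in T_p(\partial M)\subset T_pM\cap T_p\Si$, the orthogonality of $\mu$ to $T\Si$ and of $\nu$ to $TM$ give
\[
\langle \mu^\top,X\rangle=\langle \mu,X\rangle-\al\langle\nu,X\rangle=0-0=0.
\]
Since $T_p\partial M$ is one-dimensional in the two-dimensional $T_pM$, and $\|\mu^\top\|^2=\al^2+\si\neq 0$ under the standing assumptions (strict inequality $\al>1$ in the Riemannian case), this forces $\mu^\top$ to span the normal line to $\partial M$ within $T_pM$. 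This proves (a).

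For (b), I would compute
\[
\langle \mu^\top,\mu\rangle=\langle \mu,\mu\rangle-\al\langle\nu,\mu\rangle=\si+\al^2,
\]
and observe that this quantity is strictly positive in both geometric regimes: in the Lorentzian case $\si=1$ gives $1+\al^2>0$; in the Riemannian case $\si=-1$ combined with the assumption $\al>1$ gives $\al^2-1>0$. It remains to convert this sign statement into the outward direction claim. For this one checks, by inspecting $\mu=(N+(N\!\cdot\!\xi)e_3)/\sqrt{|1-(N\!\cdot\!\xi)^2|}$ and moving from $x\in\Si$ in direction $\pm\mu$, that $\mu$ points out of $\mathcal{C}$ precisely when $\Si$ is Lorentzian (spacelike $\mu$) and into $\mathcal{C}$ when $\Si$ is Riemannian (timelike $\mu$); equivalently, the outward transverse direction to $\mathcal{C}$ is $\si\mu$. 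Hence a vector $V\in T_p\bbR^3_1$ at $p\in\partial M$ has positive outward component iff the coefficient in the splitting $V=V^\Si+\si\langle V,\mu\rangle\mu$ aligns with $\si\mu$, which reduces uniformly to the condition $\langle V,\mu\rangle>0$ in either signature.

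Applying this criterion with $V=\mu^\top$ and using $\langle\mu^\top,\mu\rangle=\si+\al^2>0$ concludes that $\mu^\top$ has strictly positive outward component, i.e.\ it is the outward conormal of $\partial M\sub M$ relative to $\mathcal{C}$.
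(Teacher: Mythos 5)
Your proof is correct and follows essentially the same route as the paper's: orthogonality to $T_p\partial M$ from $\mu\perp T\Sigma$ and $\nu\perp TM$, then the single computation $\langle\mu^\top,\mu\rangle=\sigma+\alpha^2>0$ combined with the observation that in both signatures ``$V$ points out of $\mathcal{C}$'' is exactly the condition $\langle V,\mu\rangle>0$. Your explicit unpacking via the splitting $V=V^\Sigma+\sigma\langle V,\mu\rangle\mu$ is just a slightly more spelled-out version of the paper's ``same side / opposite side of $T_p\Sigma$'' phrasing.
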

\begin{proof}
For $p\in \del M$, $T_p \partial M$ is orthogonal to both $\nu$ and $\mu$. We immediately see that $\mu^\top(p)$ is orthogonal to $T_p \partial M$. In the case that $\Si$ is Lorentzian, a vector $V$ points out of $\mathcal{C}$ at $p\in\partial M$ if it is on the same side of $T_p\Sigma$ to $\mu$. If $\Si$ is Riemannian, then $V$ points out of $\mathcal{C}$ if it is on the opposite side of $T_p\Sigma$ to $\mu$. In both cases, $\ip{V}{\mu}>0$. The Lemma follows by calculating
\eq{\ip{\mu}{\mu^{\top}} = \ip{\mu}{\mu-\alpha\nu} =\al^{2}+\si>0.}
\end{proof}
\subsection*{Second fundamental form of \texorpdfstring{$\Si$}{Sigma}}
We suppose that $z\cn [0,\ell(\partial \Omega)] \ra \mathbb{R}^2$ is an arc-length para\-metrisation of $\partial \Omega$, where $\ell(\partial \Omega)$ is the length of $\partial \Omega$. We will write differentiation of $z$ with respect to arc-length parameter, $s$, by $\dot{z}$. We may now parametrise $\Si$ by $\wh{z}\cn [0,\ell(\partial\Om)]\times(0, \infty)\ra \mathbb{R}^{3}_1$ given by  
\[\wh{z}(s,\lambda) = \lambda (z(s)+e_{3}) .\]
It is easy to see that $\wh{h}$ has a zero eigenvector in the $\frac{\partial}{\partial\lambda}$ direction while in the $\partial \Omega$ directions, recalling \eqref{mu},
\[\wh{h}\left(\dot{z},\dot{z}\right) = -\lambda^{-2}\ip{\wh{z}_{ss}}{ \mu} 
=\frac{ \kappa }{\lambda\sqrt{|1-(N\cdot z)^2|}}
=\frac{ \kappa \sqrt{||z|^2-1|}}{\|\wh{z}\|\sqrt{|1-(N\cdot z)^2|}},\]
where $\kappa$ denotes the curvature of the plane curve $\partial\Omega \subset \mathbb{R}^2$. We may write
\[\nu^\Si = a \dot{z}+b \frac{\wh{z}}{\|\wh{z}\|},\]
where (as $\operatorname{sign}(|x|^2)=-\sigma$),
\[-S\|\wh{z}\|^{-1} = a\ip{\dot{z}}{\frac{\wh{z}}{\|\wh{z}\|}}-\sigma b\]
and so
\begin{align*}
|\nu^\Si|^2 &= a^2 + 2ab\ip{\dot{z}}{\frac{\wh{z}}{\|\wh{z}\|}}-\sigma b^2
= a^2\br{1+\si \ip{\dot{z}}{\frac{\wh{z}}{\|\wh{z}\|}}^2}-\si S^2\|\wh{z}\|^{-2}. 
\end{align*}
We obtain
\[a^2 = \frac{|\nu^\Si|^2+\si S^2\|\wh{z}\|^{-2}}{1+\si\ip{\dot{z}}{\frac{\wh{z}}{\|\wh{z}\|}}^2}= \frac{\si (-\|\nu^\Si\|^2+S^2\|\wh{z}\|^{-2})}{1+\si\ip{\dot{z}}{\frac{\wh{z}}{\|\wh{z}\|}}^2},\]
where we note that, by the Cauchy--Schwarz inequality, the denominator is always positive and depends only on $\partial \Omega$. As $\frac{\wh{z}}{\|\wh{z}\|}$ is a zero eigenvector (as it is in direction $\frac{\partial}{\partial \lambda}$) we immediately see that
\begin{flalign}
\wh{h}(\nu^\Si, \nu^\Si)|_x 
&= \si (S^2\|\wh{z}\|^{-2}-\|\nu^\Si\|^2)\frac{\sqrt{||z|^2-1|} \kappa}{\|\wh{z}\|\br{1+\si\ip{\dot{z}}{\frac{\wh{z}}{\|\wh{z}\|}}^2}\sqrt{|1-\ip{N}{z}^2|}}\nonumber\\
&=:\frac{\si}{\|\wh{z}\|} (S^2\|\wh{z}\|^{-2}-\|\nu^\Si\|^2) q(\wh{z}),\label{eq:hathnunu}
\end{flalign}
where $q\cn\Sigma\ra \mathbb{R}$ is a bounded positive function with positive lower bound.

\subsection*{Mean curvature flow of capillary surfaces in cone coordinates}

In this paper, we are interested in the motion of $\al$-capillary surfaces in the cone $\cC$, as described in \eqref{eq:MCFBC}, where $T$ is the largest time, such that the flow exists as a flow of spacelike and smooth surfaces.
The use of $(\del_{t}x)^{\perp}$ ensures that this set of equations is geometric, i.e. invariant under time-dependent reparametrisations of $\Om$. If we fix a parametrisation by using cone coordinates, then
\eq{x(\xi,t) = u(\xi,t)(\xi + e_{3})=e^{\rho(\xi,t)}(\xi + e_{3})} evolves
by mean curvature flow if
\eq{H = -\ip{\del_{t}x(\xi,t)}{\nu} = -\del_{t}\rho\ip{x(\xi,t)}{\nu} = \fr{e^\rho \del_{t}\rho}{\rt{(1+\xi\cdot D\rho)^{2}-\abs{D\rho}^{2}}} }
and hence
\eq{\del_{t}\rho = g^{ij}(\xi,\rho, D\rho)D_{ij}\rho - g^{ij}(\xi,\rho, D\rho)D_{i}\rho D_{j}\rho .} 
In particular, we note that
\eq{\del_{t}\rho = \fr{H}{S}. \label{eq:rhot}}

We define a ``rescaled'' $\rho$ by
\eq{\wt{\rho} := \rho - \frac 1 2 \log(1+t) \label{eq:tilderhodef}} and a modified time function 
\eq{\tau := \log(1+t).\label{eq:timefunctiondef}}
Then \eqref{eq:MCFBC} is equivalent to 
\begin{equation}
\begin{cases}
\wt{\rho}_\tau = e^{-2\wt{\rho}}a^{ij}(\xi, D\wt{\rho})D_{ij} \wt{\rho} - e^{-2\wt{\rho}} a^{ij}(\xi,  D\wt{\rho}) D_i \wt{\rho} D_j \wt{\rho}-\frac 1 2& \text{ on } \Omega\times[0,\wt{T})\\
b(\xi, D\wt{\rho})=0&\text{ on } \partial \Omega\times[0,\wt{T})\\
\wt{\rho}(0, \cdot) = \wt{\rho}_0(\cdot) & \text{ on } \Omega,
\end{cases}\label{eq:MCFrescaledrho}
\end{equation}
where, as $D\wt{\rho} = D\rho$ and using the explicit formula in \eqref{eq:ginverse},
\[a^{ij}(\xi, D\wt{\rho}) = e^{2\rho} g^{ij}(x,\rho,D\rho)\]
and, using \eqref{nu} and \eqref{mu}, 
\[b(\xi, p) = \frac{p\cdot N-N\cdot\xi(1+\xi\cdot p)}{\sqrt{\abs{(N\cdot \xi)^2-1}}\sqrt{(1+\xi\cdot p)^2-|p|^2}}+\alpha.\]
In the above $\wt{T}$ may be taken to be the largest time such that the flow is spacelike, smooth and graphical. 
\subsection*{Conditions for uniform parabolicity and obliqueness}

\begin{lemma}\label{lem:parabolicz}
Suppose that there exist constants $c_{Su^{-1}}, C_v>0$ such that $Su^{-1}>c_{Su^{-1}}$ and $v<C_v$ for all $t\in[0,T)$. Then there exist constants $0<c=c(c_{Su^{-1}},C_v,\Omega) < C=C(c_{Su^{-1}},C_v,\Omega)$ such that the eigenvalues $\lambda_i$ of $a^{ij}=e^{2\rho}g^{ij}$ satisfy $c<\lambda<C$.
\end{lemma}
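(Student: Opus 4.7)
The plan is to study the symmetric matrix $M_{ij}:=u^{-2}g_{ij}=\delta_{ij}+\xi_i D_j\rho+\xi_j D_i\rho+(|\xi|^2-1)D_i\rho D_j\rho$, whose inverse is precisely $a^{ij}=e^{2\rho}g^{ij}$. Since the induced metric $g$ is Riemannian on the spacelike surface, $M$ is positive definite, so it suffices to show that its eigenvalues $0<\lambda_1\le\lambda_2$ lie in a compact subinterval of $(0,\infty)$ with constants depending only on $c_{Su^{-1}}$, $C_v$ and $\Omega$; inversion then yields the claim for $a^{ij}$. Using the elementary bounds $\lambda_2\le\tr M$ and $\lambda_1\ge (\det M)/\lambda_2$, the task reduces to showing that $\tr M$ is bounded from above and $\det M$ from above and below by positive constants.

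Set $p:=1+\xi\cdot D\rho$ and $q:=|D\rho|^2$. The formulas from the preceding subsection give $v=p/\sqrt{p^2-q}$ and $Su^{-1}=1/\sqrt{p^2-q}$. A short computation, either directly in two dimensions or via Sylvester's determinant identity applied to the rank-two perturbation $M=I+\xi(D\rho)^T+D\rho\,\xi^T+(|\xi|^2-1)D\rho(D\rho)^T$, yields the clean identities $\det M=p^2-q=(Su^{-1})^{-2}$ and $\tr M=2p+(|\xi|^2-1)q$. Consequently $Su^{-1}>c_{Su^{-1}}$ at once gives $\det M<c_{Su^{-1}}^{-2}$, and combined with $v<C_v$ we also obtain $p=v\sqrt{p^2-q}<C_v/c_{Su^{-1}}$ and hence $q<p^2$; since $\Omega$ is bounded with $|\xi|\le R_\Omega$ say, all terms in the expression for $\tr M$ are thus controlled.

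The main obstacle is the \emph{lower} bound on $\det M$, since nothing in the hypotheses immediately prevents $p^2-q$ from being small. I would deduce it from spacelikeness: future-directedness of $\nu$ together with $(1+\xi\cdot D\rho)^2>|D\rho|^2$ forces $p>|D\rho|=\sqrt q\ge 0$, and therefore $p=1+\xi\cdot D\rho\ge 1-|\xi||D\rho|\ge 1-R_\Omega p$, yielding the geometric lower bound $p\ge 1/(1+R_\Omega)$. Combined with the $v$-bound this gives $\det M=p^2-q=p^2/v^2\ge \bigl((1+R_\Omega)C_v\bigr)^{-2}$. Plugging the resulting bounds into $\lambda_2\le\tr M$ and $\lambda_1\ge(\det M)/\lambda_2$ confines the eigenvalues of $M$ to a fixed closed subinterval of $(0,\infty)$, and passing to the inverse produces the asserted bounds on the eigenvalues of $a^{ij}$.
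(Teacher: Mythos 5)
Your proof is correct, and it takes a genuinely different route from the paper's. The paper bounds the Rayleigh quotients of both $e^{-2\rho}g_{ij}$ and its inverse $a^{ij}=e^{2\rho}g^{ij}$ directly: for any Euclidean unit vector $w$ it shows $w^ie^{-2\rho}g_{ij}w^j\le 1+2|\xi||D\rho|+|\xi|^2|D\rho|^2$ and $w_ie^{2\rho}g^{ij}w_j\le 1+(1+|\xi|^2)(v^2-1)+2v\sqrt{v^2-1}\,|\xi|$, using the auxiliary identities $|D\rho|^2\le v^2(Su^{-1})^{-2}$ and $\tfrac{|D\rho|^2}{(1+D\rho\cdot\xi)^2-|D\rho|^2}=v^2-1$; since $e^{-2\rho}g$ is positive definite (spacelikeness), upper Rayleigh bounds on a matrix and its inverse confine all eigenvalues to a compact subinterval of $(0,\infty)$. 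You instead control $M=u^{-2}g_{ij}$ through its trace and determinant. The identity $\det M=(1+\xi\cdot D\rho)^2-|D\rho|^2=(Su^{-1})^{-2}$ is a neat observation that makes the role of the hypothesis on $Su^{-1}$ completely transparent, and your auxiliary lower bound $1+\xi\cdot D\rho>1/(1+R_\Omega)$ (from spacelikeness and boundedness of $\Omega$ alone, via $1+\xi\cdot D\rho>|D\rho|$) is the extra ingredient your route needs where the paper's Rayleigh-quotient argument sidesteps it. The tradeoff: your reduction via $\lambda_2\le\tr M$, $\lambda_1\ge\det M/\lambda_2$ is specific to dimension two, whereas the paper's argument is dimension-independent — although this is immaterial here since the whole paper is set in $\mathbb{R}^3_1$. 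Both proofs are complete and of comparable length.
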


\begin{proof}
We note that in terms of $\rho$,
$$
Su^{-1}=\frac{1}{\sqrt{(1+D\rho\cdot\xi)^2-|D\rho|^2}},\qquad  1-\frac{1}{v^2} = \frac{|D\rho|^2}{(1+D\rho\cdot\xi)^2},\qquad
\frac{v}{Su^{-1} }=1+D\rho\cdot\xi.$$ 
In particular,
\[|D\rho|^2 = (1-v^{-2})\frac{v^2}{(Su^{-1})^2}\leq \frac{v^2}{(Su^{-1})^2}\leq C_v^2c_{Su^{-1}}^{-2},\]
and
\[\frac{|D\rho|^2}{(1+D\rho\cdot\xi)^2-|D\rho|^2}=(1-v^{-2})\frac{(1+D\rho\cdot\xi)^2}{(1+D\rho\cdot\xi)^2-|D\rho|^2}=v^2-1.\]
For any unit vector $w$ with respect to the Euclidean metric on $\Omega$, we have
\[e^{-2\rho}w^ig_{ij}w^j\leq 1+2|\xi||D\rho| +|\xi|^2|D\rho|^2<C_1(c_{Su^{-1}},C_v),\]
and
\begin{flalign*}e^{2\rho}w_ig^{ij}w_j &\leq 1+\frac{|D\rho|^2+|\xi|^2|D\rho|^2+2(1+D\rho\cdot \xi)|D\rho||\xi|}{(1+D\rho\cdot\xi)^2-|D\rho|^2}\\&\leq 1+(1+|\xi|^2)(v^2-1)+2v\sqrt{v^2-1}|\xi|\\
&\leq C_2(c_{Su^{-1}},C_v).
\end{flalign*}
This implies that the eigenvalues are strictly bounded away from zero and infinity. As $g_{ij}>0$ at time zero, this is hence preserved with the claimed estimates being valid.
\end{proof}


\begin{lemma}\label{lem:obliquerho}
Let $M\sub \cC$ be $\al$-capillary, with radial graph function $u$. Then
\eq{N^{i}\fr{\del b}{\del p^{i}}\big|_{p=D\rho} = \rt{\abs{(N\cdot\xi)^{2}-1}}\br{(b(\xi,D\rho)-\al)^{2}+\si }Su^{-1}.}
Therefore $b$ is uniformly oblique given $Su^{-1}>c_{Su^{-1}}>0$.
\end{lemma}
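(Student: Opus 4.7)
The plan is a direct computation of $N^i \partial b/\partial p^i$ by the quotient rule, organised so that the two resulting terms visibly assemble into the factor $(b-\alpha)^2+\sigma$.

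First I would decompose $b$ as
\eq{b(\xi,p)=\fr{A(\xi,p)}{C(\xi)B(\xi,p)}+\al,}
where $A=p\cdot N-(N\cdot\xi)(1+\xi\cdot p)$, $B=\rt{(1+\xi\cdot p)^{2}-|p|^{2}}$, and $C=\rt{|(N\cdot\xi)^{2}-1|}$. Only $A$ and $B$ depend on $p$, and from the definition of $b$ one reads off $A/(CB)=b-\al$. Next I would compute the two partial derivatives contracted with $N$: using $|N|^{2}=1$,
\eq{N^{i}\fr{\del A}{\del p^{i}}=|N|^{2}-(N\cdot\xi)^{2}=1-(N\cdot\xi)^{2},\q N^{i}\fr{\del B}{\del p^{i}}=\fr{(1+\xi\cdot p)(N\cdot\xi)-N\cdot p}{B}=-\fr{A}{B}.}
The key observation, which does all the work, is that by the definition of $\si$ as the sign of $1-(N\cdot\xi)^{2}$, one has $1-(N\cdot\xi)^{2}=\si C^{2}$.

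Applying the quotient rule to $b=A/(CB)+\al$ and substituting the above yields
\eq{N^{i}\fr{\del b}{\del p^{i}}=\fr{\si C^{2}}{CB}+\fr{A^{2}}{CB^{3}}=\fr{C}{B}\br{\si+\fr{A^{2}}{C^{2}B^{2}}}=\fr{C}{B}\br{\si+(b-\al)^{2}}.}
It remains to identify $1/B$ with $Su^{-1}$ at $p=D\rho$, which is immediate from the formula for the support function already derived in the section, giving the claimed identity.

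For the obliqueness statement, along $\del\Om$ the boundary condition $b(\xi,D\rho)=0$ reduces the identity to $\sqrt{|(N\cdot\xi)^{2}-1|}(\al^{2}+\si)Su^{-1}$. The factor $\al^{2}+\si$ is strictly positive: it equals $\al^{2}+1$ in the Lorentzian case and $\al^{2}-1>0$ in the Riemannian case by the standing hypothesis $\al>1$. The factor $\sqrt{|(N\cdot\xi)^{2}-1|}$ is bounded below on the compact curve $\del\Om$ by nondegeneracy of $\Si$, and the hypothesis $Su^{-1}\geq c_{Su^{-1}}>0$ supplies the last missing lower bound, so obliqueness is uniform. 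There is no real obstacle here; the only point that requires care is tracking the sign $\si$ correctly through the square roots, and the cleanest way to do this is to introduce $C$ and use $1-(N\cdot\xi)^{2}=\si C^{2}$ as above.
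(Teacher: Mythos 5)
Your proof is correct and follows essentially the same route as the paper: a direct quotient-rule computation of $N^i\partial b/\partial p^i$, using $1-(N\cdot\xi)^2=\si\,\abs{(N\cdot\xi)^2-1}$ to produce the $\si$ term and identifying $\bigl((1+\xi\cdot D\rho)^2-\abs{D\rho}^2\bigr)^{-1/2}=Su^{-1}$. The only cosmetic difference is that you introduce the labels $A$, $B$, $C$ and note $N^i\partial_i B=-A/B$ up front, whereas the paper carries the expressions explicitly and factors at the end; the substance, including the final obliqueness argument from $b=0$, is identical.
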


\pf{

We calculate
\eq{N^{i}\fr{\del b}{\del p^{i}} &= \fr{1 - (N\cdot\xi)^{2}}{\sqrt{\abs{(N\cdot \xi)^2-1}}\sqrt{(1+\xi\cdot p)^2-|p|^2}}-(b-\al)\fr{(1+\xi\cdot p)N\cdot\xi-N\cdot p}{(1+\xi\cdot p)^2-|p|^2}\\
				&= \fr{\rt{\abs{(N\cdot\xi)^{2}-1}}}{\rt{(1+\xi\cdot p)^2-|p|^2}}\br{\br{b-\al}^{2}+\fr{1-(N\cdot\xi)^{2}}{\abs{1-(N\cdot\xi)^{2}}} }.}
We use $b=0$ to complete the proof.

}

We collect \autoref{lem:parabolicz} and \autoref{lem:obliquerho} in the following Proposition.
 \begin{prop}\label{prop:parabolicityobliqueness}
Suppose that there exists constants $C_v,c_{Su^{-1}},C_{\wt{\rho}},c_{\wt{\rho}}>0$ such that for all $t\in[0,T)$, we have
\[v<C_v,\qquad Su^{-1}>c_{Su^{-1}}, \qquad\text{ and }\qquad c_{\wt{\rho}}<\wt{\rho}<C_{\wt{\rho}}.\]
Then \eqref{eq:MCFrescaledrho} is uniformly parabolic and has a uniformly oblique boundary condition.
\end{prop}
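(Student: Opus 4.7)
The plan is to assemble \autoref{lem:parabolicz} and \autoref{lem:obliquerho}, adding only the observation that the rescaled PDE \eqref{eq:MCFrescaledrho} carries an additional $e^{-2\wt\rho}$ prefactor on its leading term, and that the obliqueness formula must be specialised to the boundary where $b=0$.

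For parabolicity: the principal part of the equation for $\wt\rho$ is $e^{-2\wt\rho}a^{ij}(\xi,D\wt\rho)D_{ij}\wt\rho$, so the coefficient matrix under scrutiny is $e^{-2\wt\rho}a^{ij}$. Its eigenvalues factor as $e^{-2\wt\rho}\lambda_i$, where $\lambda_i$ are the eigenvalues of $a^{ij}$. By \autoref{lem:parabolicz}, the hypotheses $v<C_v$ and $Su^{-1}>c_{Su^{-1}}$ confine the $\lambda_i$ to a compact subinterval of $(0,\infty)$ depending only on $C_v,c_{Su^{-1}},\Om$. The hypothesis $c_{\wt\rho}<\wt\rho<C_{\wt\rho}$ then confines $e^{-2\wt\rho}$ to the compact subinterval $[e^{-2C_{\wt\rho}},e^{-2c_{\wt\rho}}]$ of $(0,\infty)$, and the product of the two bounds gives the claimed uniform parabolicity.

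For obliqueness: \autoref{lem:obliquerho} yields, at $p=D\rho$,
\eq{N^{i}\fr{\del b}{\del p^{i}}=\rt{\abs{(N\cdot\xi)^{2}-1}}\br{(b-\al)^{2}+\si}Su^{-1}.}
On $\del\Om\times[0,T)$ the boundary condition enforces $b=0$, so this reduces to $\rt{\abs{(N\cdot\xi)^{2}-1}}(\al^{2}+\si)Su^{-1}$. I would then show each factor is bounded below by a positive constant. The factor $\al^{2}+\si$ is a strictly positive constant: in the Lorentzian case $\si=+1$ makes this automatic, and in the Riemannian case $\si=-1$ combined with the standing assumption $\al>1$ yields $\al^{2}-1>0$. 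The factor $\rt{\abs{(N\cdot\xi)^{2}-1}}$ is bounded below by compactness of $\del\Om$ together with the nondegeneracy of $\Si$, which is precisely the statement that $(N\cdot\xi)^{2}-1$ never vanishes on $\del\Om$. The remaining factor $Su^{-1}$ is bounded below by $c_{Su^{-1}}$ by hypothesis. The product of these three lower bounds gives a uniform positive lower bound for $N^{i}\del b/\del p^{i}$ on $\del\Om\times[0,T)$, i.e.\ uniform obliqueness.

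There is no real obstacle here; the only slightly subtle point, and the reason the bounds on $\wt\rho$ are needed in addition to those of \autoref{lem:parabolicz}, is that the rescaling from $\rho$ to $\wt\rho$ reintroduces an exponential prefactor in the principal symbol that would otherwise spoil uniform ellipticity if $\wt\rho$ were allowed to diverge.
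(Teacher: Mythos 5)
Your proposal is correct and takes essentially the same approach as the paper, which gives no explicit proof at all but simply states that the proposition ``collects'' \autoref{lem:parabolicz} and \autoref{lem:obliquerho}. Your version merely spells out the routine observations that the paper leaves implicit: that the $e^{-2\wt\rho}$ prefactor in the rescaled principal symbol is controlled by the two-sided bound on $\wt\rho$, and that at $b=0$ the oblique coefficient reduces to $\sqrt{\abs{(N\cdot\xi)^2-1}}\,(\al^2+\si)\,Su^{-1}$, each factor of which is bounded below by the hypotheses, the nondegeneracy of $\Si$, and the sign restriction on $\al$.
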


\section{Boundary identities}
\label{sec:BoundaryIdentities}

We calculate the boundary properties of the flow. We note that  \autoref{lem:1stspace} and \autoref{lem:1sttime} hold for any manifold $\Sigma$ as capillary boundary condition.
\begin{lemma}[First space derivatives]\label{lem:1stspace} 
For any $Y \in T\partial M_t$,
\[0= h(Y, \mu^\top)+\wh{h}(Y, \nu^\Sigma).\]
\end{lemma}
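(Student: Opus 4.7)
The identity is just the derivative of the capillary condition $-\langle \nu, \mu\rangle = \alpha$ along the boundary, so the plan is to differentiate it and collect terms using the splittings $\mu = \mu^\top + \alpha\nu$ and $\nu = \nu^\Sigma - \sigma\alpha\mu$ that are already recorded in the preceding paragraph.

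First I would fix $p \in \partial M_t$ and a vector $Y \in T_p \partial M_t$. Since $Y$ is tangent to $\partial M_t$, it lies simultaneously in $T_pM_t$ and $T_p\Sigma$, so it can be used to differentiate both $\nu$ (as a section of the normal bundle of $M_t$) and $\mu$ (as a section of the normal bundle of $\Sigma$). Applying $\overline\nabla_Y$ to $\langle \nu, \mu\rangle = -\alpha$ gives
\[
0 = \langle \overline\nabla_Y \nu, \mu \rangle + \langle \nu, \overline\nabla_Y \mu\rangle.
\]

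Next I would process each of the two summands. For the first, since $\langle \nu,\nu\rangle = -1$ is constant one has $\overline\nabla_Y \nu \in T_pM_t$, so only the tangential part of $\mu$ contributes:
\[
\langle \overline\nabla_Y \nu, \mu\rangle = \langle \overline\nabla_Y \nu, \mu^\top\rangle + \alpha\langle \overline\nabla_Y \nu, \nu\rangle = \langle \overline\nabla_Y \nu, \mu^\top\rangle.
\]
Because $\mu^\top \in T_pM_t$, the Weingarten relation (compatible with the sign convention $h_{ij} = -\langle \mathrm{II}(\partial_i, \partial_j), \nu\rangle$ and $\langle \nu,\nu\rangle = -1$) gives $\langle \overline\nabla_Y \nu, \mu^\top\rangle = h(Y, \mu^\top)$. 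Entirely analogously, $\langle \mu,\mu\rangle = \sigma$ is constant, so $\overline\nabla_Y \mu \in T_p\Sigma$, and
\[
\langle \nu, \overline\nabla_Y \mu\rangle = \langle \nu^\Sigma, \overline\nabla_Y \mu\rangle - \sigma\alpha \langle \mu, \overline\nabla_Y \mu\rangle = \langle \overline\nabla_Y \mu, \nu^\Sigma\rangle = \wh h(Y, \nu^\Sigma),
\]
using the definition $\wh h_{ij} = \langle \overline\nabla_i \mu, y_j\rangle$ and the fact that $\nu^\Sigma \in T_p\Sigma$. Combining the two identities yields the claim.

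The proof is essentially bookkeeping, and the only real subtlety is matching sign conventions: $h$ is defined with a minus sign through the second fundamental form paired against $\nu$, while $\wh h$ is defined directly as $\langle \overline\nabla \mu, y\rangle$, and one must keep track of $\langle \nu,\nu\rangle = -1$ versus $\langle \mu,\mu\rangle = \sigma$ so that the $\alpha\nu$ and $\sigma\alpha\mu$ pieces each drop out cleanly. Once these are tracked correctly, no curvature of $\mathbb R^3_1$ enters because all computations take place using the ambient flat connection on vectors that happen to be tangent to the relevant submanifolds.
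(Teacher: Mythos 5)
Your proof is correct and is essentially the same one-line argument the paper gives: differentiate $\ip{\nu}{\mu}=-\alpha$ along $\partial M_t$ and use the Weingarten relations for $M_t$ and $\Sigma$. You have simply filled in the bookkeeping steps (the tangency of $\overline\nabla_Y\nu$ and $\overline\nabla_Y\mu$, and the decompositions $\mu=\mu^\top+\alpha\nu$, $\nu=\nu^\Sigma-\sigma\alpha\mu$) that the paper leaves implicit, and the sign conventions are tracked correctly throughout.
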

\begin{proof}
We have
\[0=Y(\ip{\nu}{\mu}) =\ip{\ov \n_Y\nu}{\mu}+\ip{\nu}{\ov \n_Y\mu} = h(Y, \mu^\top)+\wh{h}(Y, \nu^\Sigma). \]
\end{proof}

\begin{lemma}[First time derivative]\label{lem:1sttime}
On $\del M_t$,
\[\n_{\mu^\top} H =H \left[-\sigma\wh{h}\left(\frac{\nu^\Sigma}{\|\nu^\Sigma\|},\frac{\nu^\Sigma}{\|\nu^\Sigma\|}\right)-\al  h\left(\frac{\mu^{\top}}{|\mu^{\top}|},\frac{\mu^{\top}}{|\mu^{\top}|}\right)\right].\]
\end{lemma}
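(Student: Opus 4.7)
The natural strategy is to differentiate the capillary condition $\langle \nu,\mu\rangle = -\alpha$ in time. The condition $0 = \partial_t\langle \nu,\mu\rangle = \langle \partial_t\nu,\mu\rangle + \langle \nu,\partial_t\mu\rangle$ will produce the two boundary–second–fundamental–form terms on the right, while the left-hand side $\nabla_{\mu^\top}H$ will appear through the standard evolution of the normal. The key preparatory observation is that although \eqref{eq:MCFBC} only prescribes $(\partial_t x)^\perp = H\nu$, at a boundary point the full velocity $\partial_t x = H\nu + T$ must lie in $T\Sigma$ so that the boundary remains on $\Sigma$. Up to a harmless component in $T\partial M_t$, this forces
\[T = \frac{H\alpha}{\alpha^2+\sigma}\mu^\top, \qquad \partial_t x = \frac{H\sigma}{\alpha^2+\sigma}\nu^\Sigma,\]
as one checks from $\mu^\top = \mu-\alpha\nu$, $\nu^\Sigma = \nu+\sigma\alpha\mu$ and $|\mu^\top|^2 = \|\nu^\Sigma\|^2 = \alpha^2+\sigma$.

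Next I compute $\partial_t\nu$ from the flow with tangential reparametrisation. Differentiating $\langle \nu,\partial_i F\rangle = 0$ in $t$ and using $\partial_i\partial_j F = \mathrm{II}(\partial_i,\partial_j) + \nabla_{\partial_i}\partial_j$ together with $h_{ij} = -\langle \mathrm{II}(\partial_i,\partial_j),\nu\rangle$ and $\langle \nu,\nu\rangle = -1$ yields $\partial_t\nu = \nabla H + A(T)$, where $A$ is the shape operator. Pairing with $\mu$ — noting that $\partial_t\nu \in TM$ so only $\mu^\top$ contributes — gives
\[\langle \partial_t\nu,\mu\rangle = \nabla_{\mu^\top}H + h(T,\mu^\top) = \nabla_{\mu^\top}H + \frac{H\alpha}{\alpha^2+\sigma}\,h(\mu^\top,\mu^\top).\]

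For the second piece I use that $\mu$ depends only on position along $\Sigma$, so $\partial_t\mu = \overline\nabla_{\partial_t x}\mu$, which is tangent to $\Sigma$ because $\langle \mu,\mu\rangle = \sigma$ is constant. Hence $\langle \nu,\partial_t\mu\rangle = \langle \nu^\Sigma,\partial_t\mu\rangle$, and the definition $\widehat h(X,Y) = \langle\overline\nabla_X\mu,Y\rangle$ combined with $\partial_t x = \frac{H\sigma}{\alpha^2+\sigma}\nu^\Sigma$ gives
\[\langle \nu,\partial_t\mu\rangle = \frac{H\sigma}{\alpha^2+\sigma}\,\widehat h(\nu^\Sigma,\nu^\Sigma).\]

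Summing the two contributions, setting the total to zero, and normalising with $|\mu^\top|^2 = \|\nu^\Sigma\|^2 = \alpha^2+\sigma$ produces exactly the stated identity. The only subtle point — and hence the main thing to get right — is bookkeeping the tangential correction $T$: one must identify it from the requirement that the flow preserve the boundary on $\Sigma$, since without it the formula $\partial_t\nu = \nabla H$ would miss the $h(\mu^\top,\mu^\top)$ term. Everything else is a direct consequence of the Gauss/Weingarten relations and the orthogonal decompositions $\mu = \alpha\nu + \mu^\top$, $\nu = -\sigma\alpha\mu + \nu^\Sigma$.
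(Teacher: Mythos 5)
Your proof is correct and follows essentially the same strategy as the paper's: identifying the full boundary velocity $\partial_t x = H\nu + \frac{H\alpha}{\alpha^2+\sigma}\mu^\top = \frac{H\sigma}{\alpha^2+\sigma}\nu^\Sigma$ from tangency to $\Sigma$, then differentiating $\langle\nu,\mu\rangle = -\alpha$ in time and separating the two contributions $\langle\partial_t\nu,\mu\rangle$ and $\langle\nu,\partial_t\mu\rangle$. The paper packages the first contribution via the formula $\langle\frac{d}{dt}\nu,y_i\rangle = H_i + \beta h(y_i,\mu^\top)$, which is exactly your $\partial_t\nu = \nabla H + A(T)$ written in coordinates.
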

\begin{proof}
We consider a reparametrisation of mean curvature flow $y\cn B_1(0)\ra \mathbb{R}^{3}_1$ such that $\operatorname{Im}(y|_{\partial B_1(0)})\subset \Sigma$. We have on $\partial B_1(0)$, \eq{\br{\frac{dy}{dt}}^{\top} = \beta \mu^\top + V} where $V$ is a vector field in $T\partial M_t$ and $\beta$ is some function. By reparametrising near the boundary we may choose $V$ to be zero.

We know that $y\in \Sigma$ for all time and so 
\[0=\ip{H\nu +\beta \mu^\top}{\mu} = -H\al+\beta(\al^2+\sigma),\] 
and at the boundary
\[\frac{dy}{dt} = H\nu +\frac{\al}{\al^2+\si} H\mu^\top = \si\frac{H}{\al^2+\si}\nu^\Sigma. \]
Under this velocity we have 
\[\ip{\frac{d}{dt}\nu(y)}{y_i} = -\ip{\nu}{(H\nu +\frac{\al}{\al^2+\si} H\mu^\top)_i}=H_i+\frac{\al}{\al^2+\si}H h(y_i, \mu^\top).\]
Applying this we have
\[0=\frac{d}{dt}\ip{\nu(y)}{\mu(y)}=\n_{\mu^\top} H +\al H h\left(\frac{\mu^{\top}}{|\mu^{\top}|},\frac{\mu^{\top}}{|\mu^{\top}|}\right)+\sigma H \wh{h}\left(\frac{\nu^\Sigma}{\|\nu^\Sigma\|},\frac{\nu^\Sigma}{\|\nu^\Sigma\|}\right).\]
\end{proof}

The following identities will be useful to us later.
\begin{lemma}\label{lem:Sboundary}
On $\partial M_t$ we have 
\[\nabla_{\mu^\top}S=S\left[-\si\widehat{h}\left(\frac{\nu^\Sigma}{\|\nu^\Sigma\|},\frac{\nu^\Sigma}{\|\nu^\Sigma\|}\right)-\alpha h\left(\frac{\mu^\top}{|\mu^\top|},\frac{\mu^\top}{|\mu^\top|}\right)\right]. \]
\end{lemma}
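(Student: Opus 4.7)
The plan is to differentiate $S=-\ip{x}{\nu}$ directly, then to exploit the cone property of $\Si$ via $\ip{x}{\mu}=0$, combining it with \autoref{lem:1stspace} and the fact that the radial direction lies in the kernel of $\wh h$.

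First, using $\bar\n_Y x=Y$ in the ambient Minkowski space together with $\mu^\top\in TM$ (so $\ip{\mu^\top}{\nu}=0$), I get
\[\n_{\mu^\top}S=-\ip{\bar\n_{\mu^\top}x}{\nu}-\ip{x}{\bar\n_{\mu^\top}\nu}=-\ip{x}{\bar\n_{\mu^\top}\nu}.\]
Because $\bar\n_{\mu^\top}\nu\in TM$, and in the paper's sign convention one has $\ip{Y}{\bar\n_X\nu}=h(X,Y)$ for $X,Y\in TM$, the splitting $x=x^\top+S\nu$ reduces this to $\n_{\mu^\top}S=-h(\mu^\top,x^\top)$.

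Next I use that $\Si$ is a cone with apex at the origin, so at every boundary point $x\in T_x\Si$, i.e.\ $\ip{x}{\mu}=0$. This gives $\ip{x}{\mu^\top}=\al S$ and $\ip{x}{\nu^\Si}=-S$. Together with the identities $|\mu^\top|^2=\|\nu^\Si\|^2=\al^2+\si$ and $|\nu^\Si|^2=-\si\|\nu^\Si\|^2$ recorded in the excerpt, a short calculation shows that the $TM$- and $T\Si$-decompositions of $x$ share the same $T\del M$-component $w$:
\[x^\top=\frac{\al S}{|\mu^\top|^2}\mu^\top+w,\qquad x=\frac{\si S}{|\mu^\top|^2}\nu^\Si+w,\qquad w\in T\del M.\]
Substituting into $-h(\mu^\top,x^\top)$ and applying \autoref{lem:1stspace} with $Y=w$ yields
\[\n_{\mu^\top}S=-\al S\, h\br{\frac{\mu^\top}{|\mu^\top|},\frac{\mu^\top}{|\mu^\top|}}+\wh h(w,\nu^\Si).\]

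The conceptual crux is that, since $\Si$ is a cone with apex at the origin, $\mu$ is invariant along rays from the apex, so the radial direction (parallel to $x$) is a zero eigenvector of $\wh h$ --- precisely the $\del/\del\la$-direction observed when computing $\wh h$ earlier. Hence $\wh h(x,\nu^\Si)=0$, and using $w=x-\frac{\si S}{|\mu^\top|^2}\nu^\Si$ together with $\|\nu^\Si\|^2=|\mu^\top|^2$ gives
\[\wh h(w,\nu^\Si)=-\frac{\si S}{|\mu^\top|^2}\wh h(\nu^\Si,\nu^\Si)=-\si S\,\wh h\br{\frac{\nu^\Si}{\|\nu^\Si\|},\frac{\nu^\Si}{\|\nu^\Si\|}},\]
which assembles to the claim. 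The main obstacle is essentially bookkeeping the semi-Riemannian signs (distinguishing $|\cdot|^2$ from $\|\cdot\|^2$) and verifying that the two decompositions of $x$ really produce the same $T\del M$-component $w$; once these are handled the identity mirrors \autoref{lem:1sttime} structurally, exactly as one would hope given $\del_t\rho=H/S$.
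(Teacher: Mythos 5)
Your proof is correct and follows essentially the same route as the paper: both start from $\nabla_{\mu^\top}S=-h(x^\top,\mu^\top)$, decompose $x$ and $x^\top$ with respect to the common $T\partial M$ component $w=x^{M\cap\Sigma}$, invoke \autoref{lem:1stspace} on that component, and use that $x$ is a zero eigenvector of $\wh h$ together with $|\mu^\top|^2=\|\nu^\Sigma\|^2=\al^2+\si$. You merely spell out a couple of steps (the explicit derivation of $\nabla_{\mu^\top}S=-h(x^\top,\mu^\top)$ and the verification that the $TM$- and $T\Sigma$-decompositions share the same $T\partial M$-part) that the paper leaves implicit.
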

\begin{proof}
We use that $\ip{x}{\mu} = 0$ and represent $x$ using two orthonormal bases:
\[x = x^{M\cap\Si} + S\nu+ \ip{x}{\frac{\mu^\top}{|\mu^\top|}}\frac{\mu^\top}{|\mu^\top|} = x^{M\cap\Sigma} -\sigma \ip{x}{\frac{\nu^\Sigma}{\|\nu^\Sigma\|}}\frac{\nu^\Sigma}{\|\nu^\Sigma\|}.\]
Rewriting and rearranging the second equality,
\begin{equation}x-\sigma\frac{S}{\|\nu^\Sigma\|^2}\nu^\Sigma = x^{M\cap\Sigma}.\label{eq:xdecomp}\end{equation}
We have
\begin{align}\begin{split}
\nabla_{\mu^\top}S &= -h\left(x^\top,\mu^\top\right)\\
                &=-h\left(x^{M\cap\Sigma} + \ip{x^\top}{\frac{\mu^\top}{|\mu^\top|}}\frac{\mu^\top}{|\mu^\top|},\mu^\top\right)\\
                &=\widehat{h}(x^{M\cap\Sigma},\nu^{\Sigma}) - \ip{x^\top}{\mu^\top}h\left(\frac{\mu^\top}{|\mu^\top|},\frac{\mu^\top}{|\mu^\top|}\right)\\
                &=-\sigma S\widehat{h}\left(\frac{\nu^\Sigma}{\|\nu^\Sigma\|},\frac{\nu^\Sigma}{\|\nu^\Sigma\|}\right) - \ip{x}{\mu-\alpha\nu}h\left(\frac{\mu^\top}{|\mu^\top|},\frac{\mu^\top}{|\mu^\top|}\right)\\
              &=S\left(-\sigma \widehat{h}\left(\frac{\nu^\Sigma}{\|\nu^\Sigma\|},\frac{\nu^\Sigma}{\|\nu^\Sigma\|}\right)-\alpha h\left(\frac{\mu^\top}{|\mu^\top|},\frac{\mu^\top}{|\mu^\top|}\right)\right),
\end{split}\end{align}
where we used \autoref{lem:1stspace} on the third line and that $\wh{h}$ has a zero eigenvalue in the $x$ direction and \eqref{eq:xdecomp} on the fourth line.
\end{proof}

\begin{lemma}\label{lem:vboundary}
On $\partial M_t$ the following holds:
\[\nabla_{\mu^\top}v=\wh{h}(e_{3}^{M\cap \Si}, \nu^\Sigma)-(\ip{e_{3}}{\mu}+\al v)h\left(\frac{\mu^\top}{|\mu^\top|},\frac{\mu^\top}{|\mu^\top|}\right).\]
\end{lemma}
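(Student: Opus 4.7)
The plan is to differentiate $v=-\ip{\nu}{e_{3}}$, use the Weingarten equation, and decompose $e_{3}$ along the various normal/tangential directions at $\del M_t$, finishing by converting the resulting intrinsic $h$-term to an ambient $\wh h$-term via \autoref{lem:1stspace}.

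First I would note that $e_{3}$ is a parallel vector field in $\bbR^{3}_{1}$, so
\[\n_{\mu^{\top}}v = -\ip{\ov\n_{\mu^{\top}}\nu}{e_{3}}.\]
Since $\ip{\nu}{\nu}=-1$, differentiation gives $\ov\n_{\mu^{\top}}\nu\in TM$, and from the convention $h_{ij}=-\ip{\mathrm{II}(\del_i,\del_j)}{\nu}$ one reads off the Weingarten identity $\ip{\ov\n_{X}\nu}{Y}=h(X,Y)$ for $X,Y\in TM$. Decomposing $e_{3}=e_{3}^{\top}+v\nu$ (the coefficient of $\nu$ being $v$ because $\ip{e_{3}}{\nu}=-v$ and $\ip{\nu}{\nu}=-1$) the $\nu$-part drops out, and
\[\n_{\mu^{\top}}v=-h(\mu^{\top},e_{3}^{\top}).\]

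Next I would further split $e_{3}^{\top}$ within $TM$ along the orthogonal decomposition $TM=T\del M\oplus \bbR\cdot \mu^{\top}$, writing
\[e_{3}^{\top}=(e_{3}^{\top})^{M\cap\Si}+\frac{\ip{e_{3}^{\top}}{\mu^{\top}}}{|\mu^{\top}|^{2}}\mu^{\top}.\]
Observe $(e_{3}^{\top})^{M\cap\Si}=e_{3}^{M\cap\Si}$ (the $v\nu$ piece projects to zero since $\nu\perp T\del M$) and compute the coefficient using $\mu^{\top}=\mu-\al\nu$:
\[\ip{e_{3}^{\top}}{\mu^{\top}}=\ip{e_{3}-v\nu}{\mu-\al\nu}=\ip{e_{3}}{\mu}+\al v,\]
after the three cross terms in $\al$ and $v$ cancel using $\ip{e_3}{\nu}=-v$, $\ip{\nu}{\mu}=-\al$, $\ip{\nu}{\nu}=-1$.

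Finally I would apply \autoref{lem:1stspace} to the vector $Y=e_{3}^{M\cap\Si}\in T\del M_{t}$, yielding
\[h(\mu^{\top},e_{3}^{M\cap\Si})=-\wh h(e_{3}^{M\cap\Si},\nu^{\Si}),\]
and recognise $|\mu^{\top}|^{-2}h(\mu^{\top},\mu^{\top})=h(\mu^{\top}/|\mu^{\top}|,\mu^{\top}/|\mu^{\top}|)$. Assembling everything gives exactly the claimed identity. The only nontrivial bookkeeping is the sign/coefficient computation of $\ip{e_{3}^{\top}}{\mu^{\top}}$ and the verification that $(e_{3}^{\top})^{M\cap\Si}=e_{3}^{M\cap\Si}$, neither of which is conceptually hard but both must be handled carefully because the normals $\nu$ and $\mu$ have opposite causal type in the Riemannian case.
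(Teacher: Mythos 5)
Your proposal is correct and follows the same strategy as the paper's proof: decompose $e_{3}^{\top}$ along $T\partial M_t\oplus\mathbb{R}\mu^{\top}$, compute the coefficient $\ip{e_3^\top}{\mu^\top}=\ip{e_3}{\mu}+\alpha v$, and convert the tangential piece via \autoref{lem:1stspace}. The only difference is cosmetic: you spell out the Weingarten step $\nabla_{\mu^\top}v=-h(\mu^\top,e_3^\top)$ and the coefficient bookkeeping in more detail than the paper (which also writes $|\mu^\top|^2$ as $\alpha^2+\sigma$), but the argument is the same.
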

\begin{proof}
We have 
\begin{flalign*}
e_{3}^\top &= e_{3}^{M\cap\Si} +\ip{e_{3}^\top}{\mu^\top}\mu^{\top}(\al^2+\si)^{-1} =e_{3}^{M\cap\Si} +(\ip{e_{3}}{\mu}+\al v)\mu^{\top}(\al^2+\si)^{-1}, 
\end{flalign*}
so using \autoref{lem:1stspace}
\begin{flalign*}
\nabla_{\mu^\top}v&=-h(e_{3}^\top, \mu^\top)=\wh{h}(e_{3}^{M\cap\Si}, \nu^\Sigma)-(\ip{e_{3}}{\mu}+\al v)h\left(\frac{\mu^\top}{|\mu^\top|},\frac{\mu^\top}{|\mu^\top|}\right).\\
\end{flalign*}
\end{proof}

\section{Initial estimates}
\label{sec:InitialEstimates}
In this section we will prove a priori estimates for the flow \eqref{eq:MCFBC} which hold for nondegenerate boundaries $\Si$, namely either of $\sigma = \pm 1$. Recall that $T$ is the largest time of smooth and spacelike existence.
As is standard, we will use time dependent rescalings to understand the asymptotics of this solution. We define
\[\wt{M}_t := (1+t)^{-1/2}M_t \; \subset \; \bbR^{3}_1,\]
and  we will typically consider $\wt{M}_t$ in terms of the rescaled time coordinate $\tau = \log (1+t)$. We will add tilde's to all geometric quantities calculated on $\wt{M}_\tau$. We immediately see that
\[\left(\partial_\tau \wt{x}\right)^\perp 
=\wt{H}\wt{\nu}-\frac 1 2 \widetilde{x}^\perp. \]

We first observe the following. 
\begin{lemma}[Uniform spacelikeness of $M_t$ for Riemannian $\Si$]\label{lem:presspacelike} 
Under the assumptions of \autoref{thm:maintheorem} and in the case $\Sigma$ is Riemannian (i.e. $\si = -1$), there exists a constant $C=C(\alpha, \Sigma)$ such that for all $t\in[0,T)$, 
\[v\leq \max\{ \sup_{M_0} v, C\}=:C_v.\]
\end{lemma}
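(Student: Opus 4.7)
My plan is to combine a parabolic maximum principle in the interior of $M_t$ with a purely algebraic bound on $\del M_t$ coming from the Riemannian nature of $\Si$. In particular no Hopf-type boundary argument is needed, and the mean curvature does not enter the estimate.

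For the interior, the standard computation for spacelike MCF in Minkowski space (cf.~\cite{Ecker:/1997}) yields
\[\br{\del_t - \Delta}v = -|A|^2 v \leq 0.\]
Applying the parabolic maximum principle to $v-\ep t$ and letting $\ep\searrow 0$ reduces the claim to showing a uniform upper bound for $v$ on $\del M_t$ depending only on $\al$ and $\Si$.

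The main step is that boundary bound. Fix $p\in\del M_t$. Since $M_t$ is spacelike, the one-dimensional subspace $T_p\del M_t\sub T_p M_t$ is spacelike, so $|e_3^{M\cap\Si}|^2\geq 0$. In the Riemannian case $|\mu^\top|^2 = \al^2+\si = \al^2-1>0$, and $\mu^\top$ is orthogonal to $T_p\del M_t$ inside $T_pM_t$; the resulting orthogonal decomposition reads
\[e_3^\top = e_3^{M\cap\Si} + \fr{\ip{e_3}{\mu}+\al v}{\al^2-1}\,\mu^\top.\]
Taking squared norms and using $|e_3^\top|^2 = v^2-1$ gives
\[(\al^2-1)(v^2-1) \geq \br{\ip{e_3}{\mu}+\al v}^2.\]
Setting $\be:=-\ip{e_3}{\mu}\geq 1$ and solving the resulting quadratic in $v$, I obtain
\[v\leq \al\be+\rt{(\al^2-1)(\be^2-1)}.\]
Finally, \eqref{mu} gives $\be=(N\cdot\xi)/\rt{(N\cdot\xi)^2-1}$, which is uniformly bounded on the compact curve $\del\Om$ precisely because the Riemannian assumption enforces $(N\cdot\xi)^2>1$. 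This provides the constant $C(\al,\Si)$.

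I do not anticipate a serious obstacle. The whole boundary estimate reduces to a reverse Cauchy--Schwarz-type identity expressing the fact that $\del M_t$ is a spacelike curve inside the Riemannian surface $\Si$. The corresponding trick is unavailable in the Lorentzian case, where $\Si$ itself carries an indefinite induced metric; this is exactly why the spacelikeness estimate there is handled by a separate argument in Section~\ref{sec:LorentzCone}.
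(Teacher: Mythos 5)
Your proof is correct and reaches the right conclusion, but the boundary estimate is implemented through a genuinely different decomposition than the paper's. You decompose $e_3^\top$ within the tangent space $T_pM_t$ into its $T_p\partial M_t$-component and its $\mu^\top$-component (as in the proof of \autoref{lem:vboundary}), then exploit that both summands are spacelike -- the first because $M_t$ is spacelike, the second because $\Sigma$ is Riemannian -- to discard $|e_3^{M\cap\Si}|^2\geq 0$ and obtain a quadratic constraint on $v$. The paper instead decomposes $\mu$ and $\nu$ in the ambient splitting $\bbR e_3\oplus e_3^\perp$, applies Cauchy--Schwarz to the Euclidean spatial parts, and then bounds $v$ via Young's inequality. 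The two routes encode the same geometric constraint (the reverse Cauchy--Schwarz phenomenon for timelike vectors), but yours solves the resulting quadratic exactly, giving the sharp constant $v\leq\alpha\beta+\sqrt{(\alpha^2-1)(\beta^2-1)}$ with $\beta=-\ip{e_3}{\mu}$; the paper's Young step loses that sharpness, which is harmless here since only boundedness is needed. One small remark: your explicit formula $\beta=(N\cdot\xi)/\sqrt{(N\cdot\xi)^2-1}$ and the compactness/nondegeneracy argument for its uniform bound is exactly the ``uniform timelikeness of $\mu$'' the paper invokes; and the $\ep$-perturbation in the interior step is unnecessary, as the weak maximum principle already suffices once the boundary bound is in hand.
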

\begin{proof}

We first bound $v$ on $\partial M_t$. We write $\mu = a e_{3}+V$, $\nu = b e_{3}+W$ where $V,W \perp e_3$ and $a, b>0$. Then $|W|^2 = b^2-1$,  $|V|^2 = a^2-1$ and $\alpha = ab-\ip{V}{W}$. By Cauchy-Schwarz,
\[ab \leq \alpha+|V||W| = \alpha+ \sqrt{a^2-1}\sqrt{b^2-1}.\]
Squaring both sides and rearranging gives
\[a^2+b^2\leq \al^2 +2\al\sqrt{a^2-1}\sqrt{b^2-1}+1\leq \al^2 +2\al ab+1\leq \al^2+\frac{b^2}{2}+2\al^2a^2+1,\]
where we used Young's inequality to get the last estimate. Finally we see that
\[v = b \leq \sqrt{2\al^2+2(2\al^2-1)\ip{\mu}{e_{3}}^2+2}.\]
By uniform timelikeness of $\mu$, $\abs{\ip{\mu}{e_{3}}}<C(\Sigma)$ and so $v\leq C(\alpha, \Sigma)$ on $ \partial M_t$.


As to the interior, the evolution of $v$ is given by
\[\ho v = -|A|^2v,\]
we apply the weak maximum principle for an interior bound. This completes the proof.
\end{proof}


\begin{lemma}[Preservation of mean convexity of $M_t$]\label{lem:presmeanconvex}
Under the assumptions of \autoref{thm:maintheorem}, for all $t\in[0,T)$, $M_t$ remains strictly mean convex.
\end{lemma}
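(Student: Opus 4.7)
The plan is to apply the strong parabolic minimum principle together with the Hopf boundary point lemma for oblique boundary conditions, exploiting the fact that the boundary identity of \autoref{lem:1sttime} is homogeneous in $H$. The crucial observation is that at any zero of $H$ on $\partial M_t$ that identity automatically forces $\nabla_{\mu^\top} H = 0$, which will contradict the Hopf lemma unless $H \equiv 0$ globally.

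First I would recall the standard evolution equation
\eq{(\del_t - \Delta) H = |A|^2 H}
for spacelike mean curvature flow, noting that $|A|^2 = h^{ij}h_{ij} \geq 0$ since the induced metric on $M_t$ is Riemannian. Passing to the graph parametrisation over the fixed domain $\Omega$, this becomes a linear parabolic equation for the scalar $H(\xi, t)$ whose coefficients are smooth and bounded on any subinterval $[0, T_1]$ with $T_1 < T$. By \autoref{lem:1sttime}, the boundary identity takes the form $\nabla_{\mu^\top} H = c\, H$ on $\partial \Omega \times [0, T_1]$ for some bounded continuous function $c$ built from $\wh h$ and $h$, while the same computation as in \autoref{lem:obliquerho} shows that $\mu^\top$ is oblique to $\partial\Omega$.

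I would then argue by contradiction. Suppose the strict mean convexity $H > 0$ is not preserved, and let $t_0 \in (0, T)$ be the first time at which $H$ attains the value zero at some point $p \in \overline{M_{t_0}}$, with $H > 0$ on $\overline{M_t}$ for all $t < t_0$. If $p$ lies in the interior of $M_{t_0}$, then $H$ is a non-negative supersolution of $(\del_t - \Delta - |A|^2)u = 0$ vanishing at an interior parabolic point, and the strong parabolic minimum principle forces $H \equiv 0$ on $\Omega \times [0, t_0]$, contradicting strict mean convexity of $M_0$. If instead $p \in \partial M_{t_0}$, the oblique Hopf boundary point lemma yields $\nabla_{\mu^\top} H(p, t_0) < 0$, whereas the boundary identity gives $\nabla_{\mu^\top} H(p, t_0) = c(p, t_0) \cdot 0 = 0$, again a contradiction. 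Hence $H > 0$ persists on $[0, T)$.

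The main technical point will be verifying the hypotheses of the oblique Hopf lemma in our setting: $\mu^\top$ must be uniformly transverse to $\partial \Omega$ and the coefficients of both the interior equation and the Robin coefficient $c$ must be bounded on $[0, T_1]$. The first is immediate from \autoref{lem:obliquerho} and the second follows from the smoothness of the flow on $[0, T_1]$. Crucially, no sign information on $c$ is needed, because the homogeneous structure $\nabla_{\mu^\top} H = c H$ automatically forces the boundary derivative to vanish at any zero of $H$; this is the feature that makes the argument essentially immediate once \autoref{lem:1sttime} is in hand.
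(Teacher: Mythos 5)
Your proof follows exactly the same route as the paper: use the evolution equation for $H$ and the strong minimum principle to rule out interior zeros, then use the homogeneity of the boundary identity from \autoref{lem:1sttime} together with the Hopf lemma and the fact that $\mu^\top$ points outward to rule out boundary zeros. One small correction: in the spacelike setting the evolution equation is $\ho H = -\abs{A}^2 H$ (as the paper quotes from Ecker), not $+\abs{A}^2 H$; this sign discrepancy is harmless here, since the strong minimum principle for a non-negative solution vanishing at an interior point is insensitive to the sign of a bounded zeroth-order coefficient (absorb it with an exponential weight), but you should cite the correct equation.
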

\begin{proof}
In the interior of $M_t$, by \cite[Proposition 2.6(i)]{Ecker:/1997},
\[\ho H = -H|A|^2\]
and as $H>0$ on $M_0$, any interior point $p$ with $H(p)=0$ would contradict the strong maximum principle. Let $t_1$ be the first time at which the $\min_{M_t} H=0$. Then there exists a $p\in\partial M_t$ such that $H(p,t_1)=0$, and so, by \autoref{lem:1sttime}, $\n_{\mu^\top} H=0$. This contradicts the parabolic Hopf Lemma so no such $t_1$ exists.
\end{proof}

\begin{lemma}[Preservation of graphicality of $M_t$]\label{lem:presgraph} 
Under the assumptions of \autoref{thm:maintheorem}, for all $t\in[0,T)$, $M_t$ is graphical, i.e. $S> 0$.
\end{lemma}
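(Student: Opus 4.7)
The strategy parallels the proof of preservation of mean convexity (\autoref{lem:presmeanconvex}): I would derive a parabolic evolution equation for the support function $S$, then rule out $S\to 0$ both in the interior by a maximum principle argument and on the boundary by combining Hopf's lemma with the boundary identity in \autoref{lem:Sboundary}.

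\textbf{Step 1 (evolution equation).} A direct computation of the type in \cite[Prop.~2.6]{Ecker:/1997} gives, on the interior of $M_t$,
\eq{
\br{\partial_t - \Delta}S = 2H - |A|^2 S,
}
where $\Delta$ is the Laplace--Beltrami operator of $(M_t,g_t)$. Indeed, using the Gauss formula $\overline\n_j\partial_k x = \Gamma^l_{jk}\partial_l x + h_{jk}\nu$ in Minkowski signature and $\langle x,\nu\rangle = -S$, one obtains $\nabla_j \xi_k = g_{jk}-Sh_{jk}$ (where $\xi_k = \langle x,\partial_k x\rangle$), hence $\Delta S = -\nabla^k H\,\xi_k - H + |A|^2 S$; combined with $\partial_t S = H - \nabla^k H\,\xi_k$ under the normal parametrisation, the first-order terms cancel. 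Tangential reparametrisations contribute only first-order terms, which are irrelevant for what follows.

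\textbf{Step 2 (contradiction setup).} Let $t_1 := \sup\{t\in[0,T) : \min_{M_s}S>0 \text{ for all } s\in[0,t]\}$. Since $S|_{t=0}>0$ by assumption, $t_1>0$. Suppose, for contradiction, that $t_1<T$. By continuity, $\min_{M_{t_1}}S = 0$, attained at some $p\in M_{t_1}$.

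\textbf{Step 3 (interior case).} If $p$ lies in the interior of $M_{t_1}$, then at $(p,t_1)$ we have $\partial_t S\leq 0$ and $\Delta S\geq 0$, so $(\partial_t - \Delta)S\leq 0$. But by Step 1 and \autoref{lem:presmeanconvex}, $(\partial_t - \Delta)S(p,t_1) = 2H(p,t_1) > 0$, a contradiction.

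\textbf{Step 4 (boundary case, the main point).} Suppose instead $p\in\partial M_{t_1}\subset\Si$. Rewriting Step 1 as $(\partial_t - \Delta + |A|^2)\,S = 2H$, the function $S$ is a strict super-solution of the uniformly parabolic operator $\cL := \partial_t - \Delta + |A|^2\cdot$ on the smooth compact Riemannian surface-with-boundary $(M_{t_1},g_{t_1})$, with non-negative zeroth order coefficient. As $S(p,t_1)=0\leq 0$ is the minimum and $\mu^\top$ is the outer conormal to $\partial M_{t_1}$ in $M_{t_1}$ (\autoref{lem:1stspace} and the subsequent paragraph), the parabolic Hopf lemma yields the strict inequality
\eq{
\n_{\mu^\top} S(p,t_1) < 0.
}
On the other hand, \autoref{lem:Sboundary} gives $\n_{\mu^\top}S = S\cdot\bigl[-\si\wh h(\tfrac{\nu^\Si}{\|\nu^\Si\|},\tfrac{\nu^\Si}{\|\nu^\Si\|}) - \al h(\tfrac{\mu^\top}{|\mu^\top|},\tfrac{\mu^\top}{|\mu^\top|})\bigr]$, which vanishes at $p$ since $S(p,t_1)=0$. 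This contradicts the Hopf inequality, so no such $t_1$ exists and $S>0$ on $[0,T)$.

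The principal difficulty is step 4: unlike the evolution equation for $H$, the evolution equation for $S$ has a positive inhomogeneous term $2H$, so one cannot apply the maximum principle to $S$ directly; the correct viewpoint is that $S$ is a strict super-solution of $\cL u = 0$ with non-negative zeroth order part, which is precisely the hypothesis needed for parabolic Hopf at a non-positive boundary minimum. Once this framing is adopted, the boundary identity in \autoref{lem:Sboundary} --- which crucially is proportional to $S$ --- closes the argument.
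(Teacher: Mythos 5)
Your proof is correct and follows essentially the same approach as the paper: the same evolution equation $(\partial_t-\Delta)S = 2H-|A|^2S$, an interior maximum-principle argument using $H>0$, and a parabolic Hopf lemma contradiction at the boundary via the proportionality $\nabla_{\mu^\top}S = S\cdot[\dots]$ in \autoref{lem:Sboundary}. The only (inessential) difference is that the paper first dispatches the Lorentzian case trivially via $S\geq \|x\|>0$ (as $\nu$ and $x/\|x\|$ are both future-directed timelike unit vectors) and runs the evolution argument only for the Riemannian case, whereas you treat both cases uniformly.
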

\begin{proof}
In the case that $\Si$ is Lorentzian, $S|x|^{-1}\geq 1$ and so the above follows automatically. If $\Si$ is Riemannian (i.e. $\si=-1$), then by \autoref{lem:presmeanconvex}, $H>0$ is preserved and so, using \cite[Lemma~4.3]{Lambert2014},
\[\ho S = -|A|^2S+2H\geq -|A|^2 S.\]
By the strong maximum principle, there can therefore be no interior minima of $S$ with $S=0$.

As in \autoref{lem:presmeanconvex}, a boundary minimum with $S=0$ would contradict the Hopf Lemma due to the form of the boundary derivative in \autoref{lem:Sboundary}. Therefore graphicality is preserved.
\end{proof}

\begin{lemma}[Bound on speed]\label{lem:HSbound} 
Under the assumptions of \autoref{thm:maintheorem}, while $t\in[0,T)$, there exist constants $0<c_{\frac{S}{H}}<C_{\frac{S}{H}}$ depending only on initial data, such that
\[2(c_{\frac{S}{H}}+t)\leq \frac{S}{H}\leq 2(C_{\frac{S}{H}}+t).  \]
\end{lemma}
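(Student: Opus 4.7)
The plan is to exploit a coincidence between \autoref{lem:1sttime} and \autoref{lem:Sboundary}: the oblique boundary derivatives of $H$ and $S$ differ only by the overall multiplicative factor ($H$ versus $S$). Consequently the ratio $S/H$ satisfies a homogeneous oblique boundary condition, and combining this with an elementary interior evolution turns the estimate into a direct application of the parabolic maximum principle.

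For the interior, I recall the standard evolutions
\[\ho H = -H|A|^2, \qquad \ho S = -|A|^2 S + 2H,\]
already used in the proofs of \autoref{lem:presmeanconvex} and \autoref{lem:presgraph}. Applying the product rule $\ho(fg) = f\,\ho g + g\,\ho f - 2\ip{\nabla f}{\nabla g}$ with $f = S/H$ and $g = H$ produces
\[\ho\!\left(\frac{S}{H}\right) = 2 + \frac{2}{H}\ip{\nabla H}{\nabla(S/H)}.\]
Hence $w := S/H - 2t$ satisfies the linear, strictly parabolic equation
\[\ho w = \frac{2}{H}\ip{\nabla H}{\nabla w} \qquad \text{on } M_t\setminus\del M_t,\]
the drift coefficient being regular because $H>0$ is preserved by \autoref{lem:presmeanconvex}.

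On the boundary, dividing the identity of \autoref{lem:1sttime} by $H$ and the identity of \autoref{lem:Sboundary} by $S$ yields the same right-hand side, whence $\n_{\mu^\top}(S/H) = 0$ and therefore $\n_{\mu^\top} w = 0$ on $\del M_t$. Since $\ip{\mu^\top}{\mu^\top} = \al^2+\si > 0$ under the standing hypothesis on $\al$, $\mu^\top$ is a genuine non-tangential direction to $\del M_t$ in $M_t$, so the parabolic weak maximum principle together with the Hopf boundary-point lemma shows that $\max_{M_t} w$ is non-increasing and $\min_{M_t} w$ is non-decreasing in $t$. Therefore
\[\min_{M_0}\!\left(\frac{S}{H}\right) \;\le\; \frac{S}{H} - 2t \;\le\; \max_{M_0}\!\left(\frac{S}{H}\right), \qquad t\in[0,T),\]
and both $\min_{M_0}(S/H)$ and $\max_{M_0}(S/H)$ are strictly positive by the strict mean-convexity and graphicality of $M_0$. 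Setting $c_{S/H} := \tfrac12\min_{M_0}(S/H)$ and $C_{S/H} := \tfrac12\max_{M_0}(S/H)$ concludes the proof. The only real insight required is the observation that the two boundary lemmas share the identical multiplicative bracket; everything else is a routine maximum principle argument.
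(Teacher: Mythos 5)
Your proof is correct and follows essentially the same route as the paper's: the same interior evolution identity $\ho(S/H - 2t) = \tfrac{2}{H}\ip{\nabla H}{\nabla(S/H)}$ and the same observation that \autoref{lem:1sttime} and \autoref{lem:Sboundary} produce identical multiplicative brackets, so $S/H$ has vanishing oblique boundary derivative and the maximum principle yields the bound. You supply slightly more detail than the paper on why $\mu^\top$ is a genuine oblique direction ($\al^2+\si>0$) and on the Hopf-lemma step, but the substance and even the final constants are the same.
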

\begin{proof}
We have
\begin{flalign*}
\ho \left(\frac{S}{H}-2t\right)&= \frac{S}{H}\left[\frac{1}{S} \ho S - \frac 1 H \ho H\right]+2\ip{\frac{\n H}{H} }{\n \frac{S}{H}}-2\\
&=2\ip{\frac{\n H}{H} }{\n \frac{S}{H}}.
\end{flalign*}
By \autoref{lem:1sttime} and \autoref{lem:Sboundary}, $H$ and $S$ satisfy the same linear boundary condition, and so 
\[\n_{\mu^\top} \left(\frac{S}{H}-2t\right)=0.\]
We may apply the maximum principle to $\frac{S}{H}-2t$ to get the claimed estimates with $C_{\frac{S}{H}}=\frac 1 2\sup_{M_0} \frac{S}{H}$ and $c_{\frac{S}{H}}=\frac 1 2\inf_{M_0} \frac{S}{H}$.
\end{proof}

\begin{lemma}[Bound on rescaled flow]\label{lem:tildeubound} 
Under the assumptions of \autoref{thm:maintheorem}, there exist constants $c_{\wt{\rho}}<C_{\wt{\rho}}$ depending only on $M_0$ such that while $t\in[0,T)$,
\[c_{\wt{\rho}}\leq \wt{\rho}\leq C_{\wt{\rho}}.\]
\end{lemma}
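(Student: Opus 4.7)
The plan is to integrate the pointwise evolution equation for $\wt\rho$ in time and use the two-sided bound on $S/H$ from \autoref{lem:HSbound} to control $\wt\rho$ uniformly.

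From \eqref{eq:rhot} we have $\partial_t \rho = H/S$, and by the definition \eqref{eq:tilderhodef},
\[\partial_t \wt{\rho}(\xi,t) = \frac{H}{S}(\xi,t) - \frac{1}{2(1+t)}.\]
\autoref{lem:HSbound} gives $2(c_{\frac{S}{H}}+t) \le S/H \le 2(C_{\frac{S}{H}}+t)$, so
\[\frac{1}{2(C_{\frac{S}{H}}+t)} \;\le\; \partial_t\wt{\rho}+\frac{1}{2(1+t)} \;\le\; \frac{1}{2(c_{\frac{S}{H}}+t)}.\]
Now I would integrate this inequality from $0$ to $t$ at a fixed $\xi\in\overline\Omega$ (no spatial differentiation is needed). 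The antiderivatives are explicit logarithms, giving
\[\rho_0(\xi) + \tfrac{1}{2}\log\frac{C_{\frac{S}{H}}+t}{C_{\frac{S}{H}}(1+t)} \;\le\; \wt\rho(\xi,t)\;\le\; \rho_0(\xi) + \tfrac{1}{2}\log\frac{c_{\frac{S}{H}}+t}{c_{\frac{S}{H}}(1+t)}.\]

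To conclude, I would observe that the function $t\mapsto (a+t)/(a(1+t))$ takes values between $1$ and $1/a$ for every $a>0$ and $t\ge 0$, hence the two logarithmic terms on the right are uniformly bounded in $t$ by constants depending only on $c_{\frac{S}{H}}$ and $C_{\frac{S}{H}}$, which in turn depend only on $M_0$. Since $\rho_0=\log u_0$ is itself bounded above and below on $\overline\Omega$ (because $M_0$ is a compact spacelike graph inside $\mathcal{C}$ with $u_0>0$), the desired uniform constants $c_{\wt\rho}<C_{\wt\rho}$ depending only on $M_0$ follow.

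There is essentially no obstacle here beyond carefully invoking the speed estimate. The one thing to keep in mind is that this argument is pointwise in $\xi$ and does not require any maximum principle, since the evolution of $\wt\rho$ is purely along the radial direction — this is precisely what makes the cone coordinates so convenient, and it is why the previous lemma's bounds on $S/H$ translate directly into height bounds for the rescaled flow.
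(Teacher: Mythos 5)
Your argument is correct and is exactly the paper's proof: both rely on $\partial_t\rho=H/S$ from \eqref{eq:rhot}, feed in the two-sided speed estimate from \autoref{lem:HSbound} to bound $\wt\rho_t$, and integrate in time pointwise in $\xi$ (the paper simply writes the two terms over a common denominator before stating ``follows by integration''). The only quibble is your closing remark that the evolution of $\wt\rho$ is ``purely along the radial direction''---the evolution equation for $\rho$ is a genuine second-order PDE in $\xi$; what actually makes the pointwise integration work is that the \emph{bound} on $\wt\rho_t$ coming from \autoref{lem:HSbound} is uniform in $\xi$, not that the PDE decouples.
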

\begin{proof}  
Recall that by \eqref{eq:rhot}, $\rho_t=H/S.$
By applying \autoref{lem:HSbound} and using the definition of $\wt{\rho}$, \eqref{eq:tilderhodef}, we see that
\[\frac{1-C_{\frac{S}{H}}}{2(C_{\frac{S}{H}}+t)(1+t)}\leq \wt{\rho}_t\leq \frac{1-c_{\frac{S}{H}}}{2(c_{\frac{S}{H}}+t)(1+t)}.\]
The Lemma now follows by integration in time.
\end{proof}

We also note that for $\xi\in \del\Om$, there exist constants $0<c_{\|x\|^2u^{-2}}<C_{\|x\|^2u^{-2}}$ depending only on $M_0$ with
\begin{equation}c_{\|x\|^2u^{-2}}u^2(\xi,t)\leq\|x(\xi,t)\|^2\equiv u^2(\xi,t)\abs{\abs{\xi}^2 - 1}\leq C_{\|x\|^2u^{-2}}u^2(\xi,t)\label{eq:xuLipschitzequiv}.\end{equation} 
Applying \autoref{lem:tildeubound} to this we see that there exist constants $0<c_{\frac{\|x\|}{1+t}}<C_{\frac{\|x\|}{1+t}}$ such that for the position vector $x\in \partial M_t$,
\eq{c_{\frac{\|x\|^2}{1+t}}< \frac{\|x\|^2}{1+t}<C_{\frac{\|x\|^2}{1+t}}.\label{eq:xtotime}}

\section{The graphicality estimate in case \texorpdfstring{$\Si$}{Sigma} is Riemannian}
\label{sec:SpacelikeCone}
In this section we work towards strict graphicality i.e. improving  \autoref{lem:presgraph} to a strictly positive lower bound on $S$   in case   $\Si$ is a Riemannian surface.  We start by noting some useful linear algebra. 

By Cauchy-Schwarz, at any boundary point we have  \[S=-\ip{\nu^\Sigma}{x}\leq|x|\sqrt{\al^2-1}.\] However, if $S<|x|\sqrt{\al^2-1}$ (i.e.  $x$ and $\nu^\Sigma$ are not parallel) then $\left\{x, \nu^\Si\right\}$ is a basis for $T \Sigma$. In the following computations we will assume that we are in this case. Therefore we see that for any $Z\in T\partial M_t$, 
\[Z =\frac{S\ip{Z}{x}}{|x|^2(\al^2-1)-S^2}\nu^\Sigma+(\al^2-1)\frac{\ip{Z}{x}}{|x|^2(\al^2-1)-S^2}x.\] 
As $x$ is a zero eigenvector of $\wh{h}$, we may now apply the above identity to get the useful formula
\begin{equation}\wh{h}(Z, \nu^\Sigma)=\frac{(\al^2-1)S\ip{Z}{x}}{(\al^2-1)|x|^2-S^2}\wh{h}\left(\frac{\nu^\Sigma}{|\nu^\Sigma|}, \frac{\nu^\Sigma}{|\nu^\Sigma|}\right).\label{eq:boundarycurvatureprojection}
\end{equation}

\begin{lemma}
 Suppose the assumptions of \autoref{thm:maintheorem} hold and that $\Si$ is Riemannian. Then there exist constants $c, C>0$ depending on $\al, \Sigma$, and on $C_v, C_{\frac{S}{H}}, c_{\wt\rho}, C_{\wt\rho}$ in \autoref{lem:presspacelike}, \autoref{lem:HSbound} and \autoref{lem:tildeubound}, such that the following holds. For any $x\in \partial M_t$ at which $\frac S v$ attains a minimum of value $\frac S v<c|x|$, we have
\[\n_{\mu^\top}\frac Sv 
\geq \frac Sv\left[(1-C\frac S v|x|^{-1})\widehat{h}\left(\frac{\nu^\Sigma}{|\nu^\Sigma|},\frac{\nu^\Sigma}{|\nu^\Sigma|}\right)
-C\frac Sv|x|^{-2}\right] .\]\label{prop:boundaryderivcalc}
\end{lemma}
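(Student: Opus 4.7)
The plan is to differentiate $S/v$ along $\mu^\top$ using the boundary identities of Section~\ref{sec:BoundaryIdentities}, and then exploit the minimum condition together with mean convexity to control the sole problematic contribution. I first compute $\n_{\mu^\top}(S/v)$ via the quotient rule and substitute from \autoref{lem:Sboundary} and \autoref{lem:vboundary} with $\si=-1$. A pleasant cancellation occurs, as the $-\al h_{\mu\mu}$ piece of $\n_{\mu^\top} S$ combines with the $(\ip{e_3}{\mu}+\al v)h_{\mu\mu}$ piece from $-(S/v^2)\n_{\mu^\top} v$, leaving
\[
\n_{\mu^\top}\frac{S}{v} \;=\; \frac{S}{v}\widehat h_{\nu\nu} \;-\; \frac{S}{v^2}\widehat h(e_3^{M\cap\Si},\nu^\Si) \;+\; \frac{S\ip{e_3}{\mu}}{v^2}h_{\mu\mu},
\]
where I abbreviate $\widehat h_{\nu\nu}=\widehat h(\nu^\Si/|\nu^\Si|,\nu^\Si/|\nu^\Si|)$ and $h_{\mu\mu}=h(\mu^\top/|\mu^\top|,\mu^\top/|\mu^\top|)$.

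The middle summand is treated by \eqref{eq:boundarycurvatureprojection} with $Z=e_3^{M\cap\Si}$, using $\ip{e_3^{M\cap\Si}}{x}=\ip{e_3}{x^{M\cap\Si}}$. Expanding $x^{M\cap\Si}=x+\tfrac{S}{\al^2-1}\nu^\Si$ (from \eqref{eq:xdecomp}) together with $\ip{e_3}{x}=-u$ and $\ip{e_3}{\nu^\Si}=-v-\al\ip{e_3}{\mu}$, and then invoking \autoref{lem:tildeubound} and \eqref{eq:xuLipschitzequiv}, one obtains $|\ip{e_3}{x^{M\cap\Si}}|\leq C|x|$. Choosing $c$ with $cC_v<\sqrt{\al^2-1}$ ensures $(\al^2-1)|x|^2-S^2\geq \de|x|^2$ for some $\de>0$ (so in particular $\{x,\nu^\Si\}$ is a basis of $T_x\Si$), and one concludes $\bigl|\tfrac{S}{v^2}\widehat h(e_3^{M\cap\Si},\nu^\Si)\bigr|\leq C\tfrac{S^2}{v^2|x|}\widehat h_{\nu\nu}$.

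For the $h_{\mu\mu}$ term I use the minimum condition: at the minimum, $\n_\tau(S/v)=0$ for $\tau$ the unit tangent to $\del M_t$. Combined with $\n_\tau S=-h(x^\top,\tau)$, $\n_\tau v=-h(e_3^\top,\tau)$, and the identity $x^\top-(S/v)e_3^\top=x-(S/v)e_3$ (which follows from $x^\perp=S\nu$ and $e_3^\perp=v\nu$), this reads $h(W,\tau)=0$ with $W:=x-(S/v)e_3$. Decomposing $W$ in the orthonormal frame $\{\mu^\top/|\mu^\top|,\tau\}$ of $T_xM$, applying \autoref{lem:1stspace} and \eqref{eq:boundarycurvatureprojection} to obtain $h_{\mu\tau}=-S\widehat h_{\nu\nu}/\sqrt{(\al^2-1)|x|^2-S^2}$, and finally using $h_{\mu\mu}=H-h_{\tau\tau}$ (trace identity on the Riemannian tangent plane) yields after algebra
\[
\frac{S\ip{e_3}{\mu}}{v^2}h_{\mu\mu} \;=\; \frac{S\ip{e_3}{\mu}H}{v^2} \;+\; \frac{S^3\ip{e_3}{\mu}^2\widehat h_{\nu\nu}}{v^3(\al^2-1)\ip{W}{\tau}\|x^{M\cap\Si}\|},
\]
whose second summand is nonnegative and may be discarded for a lower bound.

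The remaining term $\tfrac{S\ip{e_3}{\mu}H}{v^2}$ is nonpositive, since $\ip{e_3}{\mu}<0$ (both $e_3$ and $\mu$ future-directed timelike) and $H>0$ by \autoref{lem:presmeanconvex}. Using $H\leq S/(2(c_{S/H}+t))$ from \autoref{lem:HSbound} together with $|x|^2\leq C(1+t)$ (from \autoref{lem:tildeubound} and \eqref{eq:xuLipschitzequiv}), one obtains $H|x|^2\leq CS$ and hence $\tfrac{S\ip{e_3}{\mu}H}{v^2}\geq -C(S/v)^2/|x|^2$. Assembling the three estimates delivers the claim. The \emph{main obstacle} is precisely this last bound: the crude inequality $h_{\mu\mu}\leq H\leq CS$ would only yield an error of order $S^2/v^2$ rather than the required $S^2/(v^2|x|^2)$, so one must exploit mean convexity in tandem with both the sharp time-dependent control of $S/H$ from \autoref{lem:HSbound} and the cone-radial growth $|x|^2\lesssim 1+t$ to absorb the extra factor of $|x|^{-2}$.
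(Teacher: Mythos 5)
Your proof is structurally very close to the paper's: the quotient-rule expansion, the cancellation of the two $\al h_{\mu\mu}$ terms, the use of \eqref{eq:boundarycurvatureprojection} to rewrite $\wh{h}(e_3^{M\cap\Si},\nu^\Si)$, and the use of $h_{\mu\mu}=H-h_{\tau\tau}$ together with the tangential first-order condition $h(W,\tau)=0$ (with $W=x-\frac{S}{v}e_3=x^\top-\frac{S}{v}e_3^\top\in T_xM$) are all present in the paper. However, the way you dispatch the $h_{\tau\tau}$ contribution has a genuine gap. You claim the second summand
\[\frac{S^3\ip{e_3}{\mu}^2\,\widehat h_{\nu\nu}}{v^3(\al^2-1)\ip{W}{\tau}\,\|x^{M\cap\Si}\|}\]
is nonnegative and may be discarded, but its sign is governed by $\ip{W}{\tau}=\ip{x}{\tau}-\tfrac{S}{v}\ip{e_3}{\tau}$, which is not sign-definite once you have fixed $\tau$ by $\ip{\tau}{x}=\|x^{M\cap\Si}\|>0$: the projections $x^{M\cap\Si}$ and $e_3^{M\cap\Si}$ need not be anti-parallel, so $\ip{W}{\tau}$ can be negative. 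Worse, nothing you establish prevents $\ip{W}{\tau}$ from degenerating to $0$ ($W$ need not have a uniformly large $\tau$-component), so even an absolute-value bound requires extra input. The paper handles exactly this point by computing $|W|^2=|x|^2+2\tfrac{S}{v}u-\tfrac{S^2}{v^2}$, observing $\ip{W}{\tau}^2=|W|^2-\ip{W}{\tfrac{\mu^\top}{|\mu^\top|}}^2$, and using the hypothesis $\tfrac{S}{v}<c|x|$ (with $c$ chosen at the end) to conclude $\ip{W}{\tau}^2\geq \check c|x|^2-\check C v^{-2}S^2\geq\tfrac{\check c}{2}|x|^2>0$; only then does it take absolute values, fold the resulting term back into the coefficient of $\widehat h_{\nu\nu}$, and verify that the error is of order $\tfrac{S}{v}|x|^{-1}$. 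Without this quantitative lower bound on $|\ip{W}{\tau}|$ (and without the absolute-value treatment of the sign), your final assembly does not go through. The remaining pieces of your argument — the bound $|\ip{e_3}{x^{M\cap\Si}}|\leq C|x|$ for the middle term, and the use of \autoref{lem:HSbound}, \autoref{lem:tildeubound} and \eqref{eq:xuLipschitzequiv} to get $H|x|^2\leq CS$ — are correct and match the paper.
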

 
\begin{proof}
Using \autoref{lem:presspacelike}, we choose $c<C_v^{-1}\sqrt{\al^2-1}$ to be determined later so that $S<|x|\sqrt{\al^2-1}$. Therefore, $x$ and $\nu^{\Si}$ are linearly independent, and we may apply the linear algebraic computations above. In particular,
\begin{flalign*}
\n_{\mu^\top} \frac{S}{v} &= \frac{S}{v}\left[S^{-1}\n_{\mu^\top} S-v^{-1}\n_{\mu^\top} v\right] \\
&=\frac{S}{v}\left[\widehat{h}\left(\frac{\nu^\Sigma}{|\nu^\Sigma|},\frac{\nu^\Sigma}{|\nu^\Sigma|}\right)-\alpha h\left(\frac{\mu^\top}{|\mu^\top|},\frac{\mu^\top}{|\mu^\top|}\right)
-v^{-1}\wh{h}(e_{3}^{\Sigma\cap M}, \nu^\Sigma)\right.\\&\qquad\qquad\qquad\qquad\qquad\qquad\left.+(v^{-1}\ip{e_{3}}{\mu}+\al )h\left(\frac{\mu^\top}{|\mu^\top|},\frac{\mu^\top}{|\mu^\top|}\right)\right] \\
&=\frac{S}{v}\left[\left(1-v^{-1}\frac{(\al^2-1)S\ip{e_{3}^{\Si\cap M}}{x}}{(\al^2-1)|x|^2-S^2}\right)\widehat{h}\left(\frac{\nu^\Sigma}{|\nu^\Sigma|},\frac{\nu^\Sigma}{|\nu^\Sigma|}\right)\right.\\&\qquad\qquad\qquad\qquad\qquad\qquad\left.
-v^{-1}|\ip{e_{3}}{\mu}|h\left(\frac{\mu^\top}{|\mu^\top|},\frac{\mu^\top}{|\mu^\top|}\right)\right], 
\end{flalign*}
where we used \eqref{eq:boundarycurvatureprojection}.
By assumption, $Sv^{-1}$ attains a global minimum over $M_t$ at $x\in \partial M_t$, and so $Sv^{-1}$ also attains its minimum over $\partial M_t$ at $x$. Therefore,  writing a unit tangent to $\partial M_t$ as $\gamma$, we have

\eq{
0=\n_\ga \frac Sv &= \frac Sv\left[-h(S^{-1}x^{\top}-v^{-1}e^\top_{3}, \ga)\right]\\   
&= \frac Sv\left[-\ip{S^{-1}x-v^{-1}e_{3}}{\ga}h(\ga, \ga)+\ip{S^{-1}x-v^{-1}e_{3}}{\frac{\mu^\top}{|\mu^\top|}}\wh{h}\left(\frac{\nu^\Sigma}{|\nu^\Sigma|}, \ga\right)\right].\label{eq:fromtangentialderiv}
}

Note that
\begin{flalign*}
\ip{S^{-1}x-v^{-1}e_{3}}{\mu^\top}&=\ip{S^{-1}x-v^{-1}e_{3}}{\mu}=-v^{-1}\ip{\mu}{e_{3}}=v^{-1}|\ip{\mu}{e_{3}}|>0,
\end{flalign*}
while
\begin{flalign*}
|S^{-1}x-v^{-1}e_{3}|^2 &= S^{-2}|x|^2-2v^{-1}S^{-1}\ip{x}{e_{3}}-v^{-2}=S^{-2}|x|^2+2v^{-1}uS^{-1}-v^{-2}.
\end{flalign*}
Therefore, from the previous two equations we have 
\begin{flalign*}
\ip{S^{-1}x-v^{-1}e_{3}}{\ga}^2 &= S^{-2}|x|^2+2v^{-1}uS^{-1}-v^{-2}-v^{-2}\ip{\mu}{e_{3}}^2(\al^2-1)^{-1}\\
&\geq S^{-2}(\check{c}|x|^2-\check{C}v^{-2}S^2)
\end{flalign*}
for some $0<\check{c}$ and $1<\check{C}$ depending only on $\Sigma$ and $\alpha$. We also modify these constants for future estimates (by making $\check{c}$ smaller  and $\check{C}$ larger depending on the spacelikeness estimate in \autoref{lem:presspacelike}) so that, as a result of the upper bound on $S$ as above,
\[|x|^2(\al^2-1)-S^2>\check{c}|x|^2-\check{C}v^{-2}S^2 > 0.\] 

Applying \eqref{eq:boundarycurvatureprojection}, we also have
\begin{flalign*}\ip{S^{-1}x-v^{-1}e_{3}}{\frac{\mu^\top}{|\mu^\top|}}&\wh{h}\br{\frac{\nu^\Sigma}{|\nu^\Sigma|}, \ga}
=\frac{S\ip{\gamma}{x}v^{-1}|\ip{e_{3}}{\mu}|}{(\al^2-1)|x|^2-S^2}\widehat{h}\left(\frac{\nu^\Sigma}{|\nu^\Sigma|},\frac{\nu^\Sigma}{|\nu^\Sigma|}\right).\end{flalign*}

Using \eqref{eq:fromtangentialderiv} at $p$, along with the above estimates and identities we see that
\begin{flalign*}
h\left(\frac{\mu^\top}{|\mu^\top|},\frac{\mu^\top}{|\mu^\top|}\right)&=H-h(\ga,\ga)\\
&\leq \frac{S}{2c_{\frac{S}{H}}+2t}-\ip{S^{-1}x-v^{-1}e_{3}}{\ga}^{-1}\frac{S\ip{\gamma}{x}v^{-1}|\ip{e_{3}}{\mu}|}{|x|^2(\al^2-1)-S^2}\widehat{h}\left(\frac{\nu^\Sigma}{|\nu^\Sigma|},\frac{\nu^\Sigma}{|\nu^\Sigma|}\right) \\
&\leq \frac{S}{2c_{\frac{S}{H}}+2t} +\frac{S^2v^{-1}|\ip{\gamma}{x}\ip{e_{3}}{\mu}|}{(\check{c}|x|^2-\check{C}v^{-2}S^2)^\frac 3 2}\widehat{h}\left(\frac{\nu^\Sigma}{|\nu^\Sigma|},\frac{\nu^\Sigma}{|\nu^\Sigma|}\right).
\end{flalign*}

Therefore,
\eq{
\n_{\mu^\top} \frac{S}{v} &\geq\frac{S}{v}\left[\left(1-v^{-1}\frac{S(\al^2-1)|\ip{e_{3}^{\Si \cap M}}{x}|}{\check{c}|x|^2-\check{C}v^{-2}S^2}-\frac{S^2v^{-2}|\ip{\gamma}{x}|\ip{e_{3}}{\mu}^2}{(\check{c}|x|^2-\check{C}v^{-2}S^2)^\frac 3 2}\right)\widehat{h}\left(\frac{\nu^\Sigma}{|\nu^\Sigma|},\frac{\nu^\Sigma}{|\nu^\Sigma|}\right)
\right.\\
&\qquad\qquad\qquad\qquad\qquad\qquad \left.-v^{-1}|\ip{e_{3}}{\mu}|\frac{S}{2c_{\frac{S}{H}}+2t}\right].
}
We now choose 
\eq{c= \min\left\{\rt{\frac{\check{c}}{2\check{C}}}, C_v^{-1}\sqrt{\al^2-1}\right\},} and using this bound, we may find a $C$ depending only on $\alpha$, $\Sigma$ and estimates in \autoref{lem:presspacelike}, \ref{lem:HSbound} and \ref{lem:tildeubound} so that
\begin{flalign*}
\n_{\mu^\top} \frac{S}{v} 
&\geq \frac{S}{v}\left[(1-C\frac Sv|x|^{-1})\widehat{h}\left(\frac{\nu^\Sigma}{|\nu^\Sigma|},\frac{\nu^\Sigma}{|\nu^\Sigma|}\right)
-C\frac Sv|x|^{-2}\right].
\end{flalign*}

\end{proof}

\begin{prop}[Uniform graphicality estimate]\label{prop:UniformGraphSpacelikeCone}
If the assumptions of \autoref{thm:maintheorem} hold and $\Sigma$ is Riemannian, then there exists $c>0$ depending only on $\alpha$, $\Sigma$ and the estimates in \autoref{lem:presspacelike}, \autoref{lem:HSbound} and \autoref{lem:tildeubound} such that for all $t\in [0,T)$,
\[\inf_{M_t} \frac S{vu} \geq c.\]
\end{prop}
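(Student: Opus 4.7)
The plan is to apply the parabolic maximum principle to $\psi := S/v$ in combination with the boundary estimate \autoref{prop:boundaryderivcalc}, then upgrade the result to a uniform bound on $S/(vu)$ via the scale matching supplied by Lemmas \ref{lem:HSbound} and \ref{lem:tildeubound}. Using the standard spacelike MCF identity $\ho S = -|A|^2 S + 2H$ together with $\ho v = -|A|^2 v$ (as in the proof of \autoref{lem:presspacelike}), the $|A|^2$ terms cancel and
\[\ho(S/v) = \frac{2H}{v} + 2\nabla\log v \cdot \nabla(S/v).\]
At an interior minimum of $S/v$ on $M_t$, the drift term vanishes and strict mean convexity (\autoref{lem:presmeanconvex}) gives $\partial_t(S/v) \geq 2H/v > 0$, so interior minima of $S/v$ strictly increase.

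If the minimum of $S/v$ over $M_t$ is attained at a boundary point $p$, Hopf's lemma gives $\nabla_{\mu^\top}(S/v)(p) \leq 0$. Invoking \autoref{prop:boundaryderivcalc}, I would choose $c$ below the lemma's threshold and also small enough that the $\widehat{h}(\nu^\Sigma/|\nu^\Sigma|,\nu^\Sigma/|\nu^\Sigma|)$ term dominates the error $CS/v|x|^{-2}$ in the stated inequality; this is possible since \eqref{eq:hathnunu} and strict convexity of the cone force $\widehat{h}(\nu^\Sigma/|\nu^\Sigma|,\nu^\Sigma/|\nu^\Sigma|) \gtrsim 1/|x|$ once $S/|x|$ is controlled away from $\sqrt{\alpha^2-1}$. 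The resulting strict positivity of $\nabla_{\mu^\top}(S/v)(p)$ contradicts Hopf, forcing $(S/v)(p) \geq c|x|(p)$.

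To pass from this to $\inf_{M_t} S/(vu) \geq c$, I would implement the above through a time-dependent barrier $F := S/v - c\sqrt{1+t}$, chosen so $F(\cdot,0) \geq 0$. At a hypothetical first interior touching point of $F = 0$, spatial gradient vanishes, so $\partial_t F \geq 2H/v - c/(2\sqrt{1+t})$; here \autoref{lem:HSbound} and $S/v = c\sqrt{1+t}$ at the touch point give $2H/v \geq c\sqrt{1+t}/(C_v(C_{S/H}+t))$, which exceeds $c/(2\sqrt{1+t})$ for appropriately balanced constants, contradicting first touch. At a boundary first touch, \eqref{eq:xuLipschitzequiv} and \autoref{lem:tildeubound} give $|x|(p) \geq c_1 e^{c_{\tilde{\rho}}}\sqrt{1+t}$, so for sufficiently small $c$ the inequality $S/v < c_*|x|$ holds and the boundary contradiction above applies. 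Thus $S/v \geq c\sqrt{1+t}$ on $M_t$, and since $u \leq e^{C_{\tilde{\rho}}}\sqrt{1+t}$ by \autoref{lem:tildeubound}, we obtain $S/(vu) \geq c' > 0$.

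The principal obstacle is matching three scales: the barrier rate $\sqrt{1+t}$, the interior speed $2H/v \sim 1/\sqrt{1+t}$ at a minimum (via \autoref{lem:HSbound}), and the boundary value $|x| \sim \sqrt{1+t}$. These align only because the rescaled estimates are genuinely uniform; making the barrier argument close in both time regimes requires careful choice of $c$ in terms of $\alpha$, $\Sigma$, $C_v$, $c_{S/H}$, $C_{S/H}$, $c_{\tilde{\rho}}$, $C_{\tilde{\rho}}$, and the initial infimum of $S/(vu)$. If the simple $\sqrt{1+t}$ barrier fails in an extreme parameter regime (e.g.\ if $C_v$ is very large), one can instead use the modified barrier $c(C_{S/H}+t)^{1/C_v}$ for the interior step and recover the claim on boundary points via the same argument as above.
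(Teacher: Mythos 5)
Your proposal follows the same strategy as the paper---evolve $S/v$ via the cancellation of $|A|^2$ terms, show boundary minima are inadmissible when $S/v$ is small using \autoref{prop:boundaryderivcalc}, then close with a time-dependent comparison. However, the barrier $F = S/v - c\sqrt{1+t}$ contains a genuine gap. At a first interior touching point one has $S/v = c\sqrt{1+t}$ and $\nabla(S/v)=0$, so (dropping the spurious factor of $C_v$ in your estimate: $2H/v = (2H/S)(S/v)$, and no $C_v$ appears)
\begin{equation}
\ho F \;\geq\; \frac{c\sqrt{1+t}}{C_{\frac{S}{H}}+t} - \frac{c}{2\sqrt{1+t}} \;=\; \frac{c}{2\sqrt{1+t}}\left(\frac{2(1+t)}{C_{\frac{S}{H}}+t}-1\right),
\end{equation}
which is nonpositive for all $t\leq C_{\frac{S}{H}}-2$. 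If $C_{\frac{S}{H}}>2$ the argument therefore does not close on an initial time interval, and you cannot sidestep this by appealing to compactness there: the proposition is precisely what guarantees the flow can be continued, so any bound on $[0,C_{\frac{S}{H}}-2]$ has to depend only on the data of \autoref{thm:maintheorem}, not on the solution itself.

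Your fallback $c(C_{\frac{S}{H}}+t)^{1/C_v}$ does not repair this; $C_v$ plays no role in the time-scale matching and the exponent has no reason to equal $1/C_v$. The correct comparison is $c(C_{\frac{S}{H}}+t)^{1/2}$---same square-root power but shifted by $C_{\frac{S}{H}}$ rather than by $1$---for which $(\log B)' = \frac{1}{2(C_{\frac{S}{H}}+t)} < \frac{1}{C_{\frac{S}{H}}+t}$ holds for all $t\geq 0$, so the touching-point computation yields $\ho F \geq \frac{c}{2}(C_{\frac{S}{H}}+t)^{-1/2} > 0$ unconditionally. The paper implements exactly this by applying the weak maximum principle to the weighted quantity $\frac{S}{v}(C_{\frac{S}{H}}+t)^{-1/2}$, whose evolution carries the strictly positive source $\frac{1}{2(C_{\frac{S}{H}}+t)^{3/2}}\frac{S}{v}$; your boundary conclusion $S/v \geq c_3\sqrt{1+t}$ then still yields a uniform lower bound on the weighted quantity because $(1+t)/(C_{\frac{S}{H}}+t)\geq \min\{1,C_{\frac{S}{H}}^{-1}\}$.
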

\begin{proof} Throughout this proof we let $c_i, C_i>0$, $i\in\mathbb{N}$, be constants depending on  $\alpha$, $\Sigma$ and the estimates in \autoref{lem:presspacelike}, \autoref{lem:HSbound} and \autoref{lem:tildeubound}. We consider the function $\frac Sv$. By \eqref{eq:hathnunu} and \autoref{lem:presspacelike} there exists $C_1$ and $c_\Sigma=c_\Sigma(\Sigma)$ such that,  at the boundary,
\[\widehat{h}\left(\frac{\nu^\Sigma}{|\nu^\Sigma|},\frac{\nu^\Sigma}{|\nu^\Sigma|}\right) =q(x) |x|^{-1}(1-S^2|x|^{-2}(\al^2-1)^{-1})>c_\Sigma|x|^{-1}-C_1S^2v^{-2}|x|^{-3},\]
where we used the strict convexity of $\Si$ for the lower bound on $q$. While $\frac Sv<\sqrt{\frac{c_\Si}{2C_1}}|x|$ we therefore have
\[\widehat{h}\left(\frac{\nu^\Sigma}{|\nu^\Sigma|},\frac{\nu^\Sigma}{|\nu^\Sigma|}\right) >\frac 1 2 c_\Sigma|x|^{-1}.\]
Now, for $c$ as in \autoref{prop:boundaryderivcalc}, while $\frac Sv<\min\left\{c,\sqrt{\frac{c_\Si}{2C_1}}\right\}|x|$, there holds
\[\n_{\mu^\top} \frac Sv \geq \frac Sv|x|^{-1}\left[\frac{1}{4}c_\Sigma -C_2\frac S v|x|^{-1}\right].\]
We see that as $\mu^\top$ is outward pointing, a boundary minimum is not possible for 
\eq{\frac S {v|x|}<\min\left(c,\sqrt{\frac{c_\Si}{2C_1}}, \frac 1 4c_\Sigma C_2^{-1}\right)=:c_2.} Hence, using \eqref{eq:xtotime} and \autoref{lem:tildeubound}, there exists a $c_3>0$ such that at a boundary minimum there holds
\[\frac Sv\geq c_3\sqrt{1+t}.\]

Next in the interior, using \autoref{lem:HSbound}, 
\begin{flalign*}
\ho \frac Sv &= \frac{S}{v}\left[S^{-1} \ho S-v^{-1}\ho v\right]+2\ip{v^{-1}\n v}{\n \frac S v}\\
&= 2\frac{H}{v}+2\ip{v^{-1}\n v}{\n \frac Sv}\\
&\geq \frac{1}{( C_{\frac{S}{H}}+t)}\frac S v+2\ip{v^{-1}\n v}{\n \frac Sv}.
\end{flalign*}
Therefore
\begin{flalign*}
\ho \frac Sv( C_{\frac{S}{H}}+t)^{- \frac{1}{2} } &\geq \frac{1}{2(C_{\frac{S}{H}}+t)^\frac 3 2}\frac Sv+2\ip{v^{-1}\n v}{\n\br{\frac S v(C_{\frac{S}{H}}+t)^{- \frac 1 2}}}.
\end{flalign*}
Applying the weak maximum principle yields 
$\frac Sv( C_{\frac{S}{H}}+t)^{- \frac{1}{2} }>c_4(M_0, c_3, C_{\frac{S}{H}})$.
 The Lemma now follows by \autoref{lem:tildeubound}. \end{proof}
\begin{rem}
    We note that the evolution equation above at first glance appears to allow for stronger lower bounds. However, such a global estimate cannot hold since smaller boundary minima would then not be disallowed by the above boundary estimates.
\end{rem}

\section{A uniform spacelikeness estimate in case \texorpdfstring{$\Si$}{Sigma} is Lorentzian}
\label{sec:LorentzCone}
We now obtain a uniform upper estimate on $S$ in case $\Si$ Lorentzian, namely $\si=1$. 
We have \[S^2=\|x\|^2\|\nu^\Si\|^2\ip{\frac{x}{\|x\|}}{\frac{\nu^\Si}{\|\nu^\Si\|}}^2\geq \|x\|^2(\al^2+1),\] as the inner product of unit timelike vectors has modulus greater than or equal to 1. As in the previous section, we only deal with the case $S^2> \|x\|^2(\al^2+1)$ as we are aiming to bound $S$ from above. In this case $\left\{x,\nu^\Si\right\}$ is a basis of $T\Sigma$. As previously, for any $Z\in T\partial M_t$ we may calculate
\[Z=(\al^2+1)\frac{\ip{Z}{x}}{S^2-\|x\|^2(\al^2+1)} x -\frac{\ip{Z}{x}S}{S^2-\|x\|^2(\al^2+1)}\nu^\Sigma.\]
 We therefore have
\begin{equation}
\wh{h}(Z,\nu^\Sigma)=-\frac{\ip{Z}{x}(\al^2+1)S}{S^2-\|x\|^2(\al^2+1)}\wh{h}\left(\frac{\nu^\Sigma}{\|\nu^\Sigma\|},\frac{\nu^\Sigma}{\|\nu^\Sigma\|}\right).\label{eq:LorentzSigma2ff}
\end{equation}

\begin{lemma}
Suppose that the assumptions of \autoref{thm:maintheorem} hold and
 that $\Sigma$ is Lorentzian. Then for any $x\in\partial M_t$ at which $S$ attains its global maximum of value $S^2> \|x\|^2(\al^2+1)$, we have \label{prop:timelikeboundaryspacelike}
\begin{flalign*}
\nabla_{\mu^\top}S &= S\left(-\frac{(S^2-\|x\|^2)(\al^2+1)}{S^2-\|x\|^2(\al^2+1)}\wh{h}\left(\frac{\nu^\Sigma}{\|\nu^\Sigma\|},\frac{\nu^\Sigma}{\|\nu^\Sigma\|}\right)-\alpha H\right).\\
\end{flalign*}
\end{lemma}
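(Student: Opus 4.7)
The plan is to start from \autoref{lem:Sboundary} specialised to $\sigma=1$, which immediately gives
\[ \nabla_{\mu^\top}S = S\left[-\wh{h}\!\left(\frac{\nu^\Sigma}{\|\nu^\Sigma\|},\frac{\nu^\Sigma}{\|\nu^\Sigma\|}\right) - \alpha\, h\!\left(\frac{\mu^\top}{|\mu^\top|},\frac{\mu^\top}{|\mu^\top|}\right)\right]. \]
Matching this against the claimed identity, the whole task reduces to rewriting $h(\mu^\top/|\mu^\top|,\mu^\top/|\mu^\top|)$ using the additional information that $S$ attains a \emph{global} maximum at the boundary point~$x$.

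Since $M_t$ is spacelike and two-dimensional, I would pick a unit tangent $\gamma$ to $\partial M_t$ so that $\{\gamma,\,\mu^\top/|\mu^\top|\}$ is an orthonormal basis of $T_xM$ (recall $|\mu^\top|^2=\alpha^2+1>0$ in the Lorentzian case). Then $H=h(\gamma,\gamma)+h(\mu^\top/|\mu^\top|,\mu^\top/|\mu^\top|)$, so it suffices to find $h(\gamma,\gamma)$. For this I would use $\nabla_\gamma S=0$ together with the identity $\nabla_Z S=-h(x^\top,Z)$ used in the proof of \autoref{lem:Sboundary}. Decomposing $x^\top$ in the chosen basis, using $\ip{x}{\mu}=0$ (since $x$ is radial, hence tangent to the cone $\Sigma$) to get $\ip{x^\top}{\mu^\top}=\alpha S$, produces
\[ x^\top = \ip{x}{\gamma}\gamma + \frac{\alpha S}{\alpha^2+1}\mu^\top, \qquad \ip{x}{\gamma}\,h(\gamma,\gamma) = -\frac{\alpha S}{\alpha^2+1}\,h(\mu^\top,\gamma). \]

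Now \autoref{lem:1stspace} gives $h(\mu^\top,\gamma)=-\wh{h}(\gamma,\nu^\Sigma)$, and \eqref{eq:LorentzSigma2ff} expresses $\wh{h}(\gamma,\nu^\Sigma)$ as a multiple of $\wh{h}(\nu^\Sigma/\|\nu^\Sigma\|,\nu^\Sigma/\|\nu^\Sigma\|)$ linear in $\ip{\gamma}{x}$. The factor $\ip{\gamma}{x}$ on both sides of the relation for $h(\gamma,\gamma)$ then cancels, leaving
\[ h(\gamma,\gamma) = -\frac{\alpha S^2}{S^2-\|x\|^2(\alpha^2+1)}\,\wh{h}\!\left(\frac{\nu^\Sigma}{\|\nu^\Sigma\|},\frac{\nu^\Sigma}{\|\nu^\Sigma\|}\right). \]
The cancellation is legitimate because $\ip{\gamma}{x}\neq 0$: the representation formula for tangent vectors to $\Sigma$ recorded just before the statement shows that any nonzero $Z\in T\partial M_t$ has $\ip{Z}{x}\neq 0$ whenever $S^2>\|x\|^2(\alpha^2+1)$, which is exactly our hypothesis.

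Substituting $h(\mu^\top/|\mu^\top|,\mu^\top/|\mu^\top|)=H-h(\gamma,\gamma)$ back into the initial expression for $\nabla_{\mu^\top}S$ yields the claim, once one rewrites
\[ 1+\frac{\alpha^2S^2}{S^2-\|x\|^2(\alpha^2+1)} = \frac{(\alpha^2+1)(S^2-\|x\|^2)}{S^2-\|x\|^2(\alpha^2+1)}. \]
I do not expect a genuine obstacle here: the argument is a purely linear-algebraic unwrapping of the boundary identities already gathered in Sections~\ref{sec:GeometricQuantities} and~\ref{sec:BoundaryIdentities}. The only care needed is with the signs ($\sigma=1$, $|\mu^\top|^2=\alpha^2+1$, $\ip{x}{\mu^\top}=\alpha S$) and with noting that the tangential maximum condition can only be solved for $h(\gamma,\gamma)$ once $\ip{x}{\gamma}\neq 0$ is established via the hypothesis on~$S$.
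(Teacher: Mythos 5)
Your proposal is correct and follows essentially the same route as the paper's proof: start from \autoref{lem:Sboundary}, use the vanishing tangential derivative $\nabla_\gamma S=0$ together with $\nabla_Z S=-h(x^\top,Z)$ and the decomposition $x^\top=\ip{x}{\gamma}\gamma+\tfrac{\alpha S}{\alpha^2+1}\mu^\top$, pass $h(\mu^\top,\gamma)$ to $\wh{h}(\gamma,\nu^\Sigma)$ via \autoref{lem:1stspace}, and convert via \eqref{eq:LorentzSigma2ff}, with the cancellation of $\ip{\gamma}{x}$ justified by the nondegeneracy hypothesis $S^2>\|x\|^2(\alpha^2+1)$. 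The only cosmetic difference is that you argue $\ip{\gamma}{x}\neq 0$ via the representation formula for $T\partial M_t$ whereas the paper notes directly that $\ip{x}{\gamma}=0$ would force $x\parallel\nu^\Sigma$ and hence $S^2=\|x\|^2(\alpha^2+1)$.
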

\begin{proof}
We recall that, by \autoref{lem:Sboundary},  
\[\nabla_{\mu^\top}S=S\left(-\widehat{h}\left(\frac{\nu^\Sigma}{|\nu^\Sigma|},\frac{\nu^\Sigma}{|\nu^\Sigma|}\right)-\alpha h\left(\frac{\mu^\top}{|\mu^\top|},\frac{\mu^\top}{|\mu^\top|}\right)\right).\]
As $S$ attains its global maximum at $x\in \partial M_t$,  $S$ also attains its maximum over $\del M_t$. Therefore, for a unit vector $\gamma\in T_p\del M_t$, 
\[0=\n_\gamma S=-h(x^\top, \gamma) = -\ip{x^\top}{\gamma}h(\gamma, \gamma)+\ip{x^\top}{\frac{\mu^\top}{|\mu^\top|}}\wh{h}\br{\gamma, \frac{\nu^\Sigma}{\|\nu^\Sigma\|}}.\]
We know that $\ip{x^\top}{\gamma}=\ip{x}{\gamma}\neq 0$ (as otherwise $x=\beta \nu^\Si$ and so $S^2=\|x\|^2(\al^2+1)$). Therefore we may write
\begin{equation}{h}(\ga,\ga) = \frac{\ip{x^\top}{\mu^\top}}{\ip{x}{\ga}(\al^2+1)}\wh{h}(\gamma,\nu^\Sigma).\label{eq:justbdry2ff}\end{equation}
We have
\[\ip{x^\top}{\mu^\top}=S\alpha\]
and so
\[\ip{x^\top}{\gamma}^2=|x^\top|^2-\ip{x^\top}{\frac{\mu^\top}{|\mu^\top|}}^2=\frac{S^2}{\al^2+1}-\|x\|^2.\]
Therefore by \eqref{eq:LorentzSigma2ff} and \eqref{eq:justbdry2ff} we have 
\begin{flalign*}h\left(\frac{\mu^\top}{|\mu^\top|},\frac{\mu^\top}{|\mu^\top|}\right)=H-h(\ga,\ga)
&= H-\frac{S\al}{(\al^2+1)\ip{\gamma}{x}}\wh{h}\left(\gamma,\nu^\Sigma\right)\\&= H+\frac{S^2\al}{S^2-\|x\|^2(\al^2+1)}\wh{h}\left(\frac{\nu^\Sigma}{\|\nu^\Sigma\|},\frac{\nu^\Sigma}{\|\nu^\Sigma\|}\right)
\end{flalign*}
and so
\begin{flalign*}
\nabla_{\mu^\top}S
&= S\left(-\widehat{h}\left(\frac{\nu^\Sigma}{|\nu^\Sigma|},\frac{\nu^\Sigma}{|\nu^\Sigma|}\right)-\frac{S^2\al^2}{S^2-\|x\|^2(\al^2+1)}\wh{h}\left(\frac{\nu^\Sigma}{\|\nu^\Sigma\|},\frac{\nu^\Sigma}{\|\nu^\Sigma\|}\right)-\alpha H\right)\\
&= S\left(-\frac{(S^2-\|x\|^2)(\al^2+1)}{S^2-\|x\|^2(\al^2+1)}\wh{h}\left(\frac{\nu^\Sigma}{\|\nu^\Sigma\|},\frac{\nu^\Sigma}{\|\nu^\Sigma\|}\right)-\alpha H\right).
\end{flalign*}
\end{proof}

\begin{prop}[Preservation of uniform spacelikeness]\label{prop:uniformspacelikeness}
 Suppose that the assumptions of \autoref{thm:maintheorem} hold and that $\Sigma$ is Lorentzian. Then there exists a constant $C>0$ depending only on $\alpha$, $\Sigma$ and the estimates in \autoref{lem:presspacelike}, \autoref{lem:HSbound} and \autoref{lem:tildeubound} such that for all $t\in[0,T)$, we have $S<C\|x\|$. 
\end{prop}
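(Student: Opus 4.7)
The plan is to derive a uniform upper bound on the ratio $\phi := S^{2}\|x\|^{-2}$ via the parabolic maximum principle on $M\times[0,T)$. The reverse Cauchy--Schwarz inequality applied to the timelike vectors $x$ and $\nu$ gives $\phi\geq 1$ everywhere, and applied to $x$ and $\nu^{\Si}$ (both timelike in the Lorentzian case, since $\|\nu^{\Si}\|^{2}=\al^{2}+1$) gives $\phi\geq\al^{2}+1$ on $\del M_{t}$. If $\phi\leq\al^{2}+1$ pointwise, then the conclusion holds trivially; otherwise the space--time maximum of $\phi$ is attained at some $(p_{*},t_{*})$ with $\phi(p_{*},t_{*})>\al^{2}+1$, and in particular the nondegeneracy hypothesis $S^{2}>\|x\|^{2}(\al^{2}+1)$ of \autoref{prop:timelikeboundaryspacelike} is satisfied whenever $p_{*}\in\del M_{t_{*}}$.

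For the interior case, I would compute $\ho\phi$ using $\ho S=-|A|^{2}S+2H$ from \cite[Lemma~4.3]{Lambert2014}, the identity $\ho\|x\|^{2}=4$ (obtained from $\partial_{t}\|x\|^{2}=2HS$ together with $\Delta\|x\|^{2}=-4+2HS$, both consequences of $\Delta x=\vec H$), and $|\nabla\|x\|^{2}|^{2}=4(S^{2}-\|x\|^{2})=4\|x\|^{2}(\phi-1)$ coming from the orthogonal decomposition $x=x^{\top}+S\nu$. At a critical point of $\phi$ the gradient cross-terms should collapse cleanly to yield
\[\ho\phi\Big|_{\nabla\phi=0}=-2|A|^{2}\phi+\fr{4HS-2\phi(\phi+1)}{\|x\|^{2}}.\]
If the space--time maximum is interior then $\ho\phi\geq 0$ at $(p_{*},t_{*})$; combining this with $HS\leq \phi\|x\|^{2}/(2(c_{\fr{S}{H}}+t))$ from \autoref{lem:HSbound} and with $\|x\|^{2}\leq u^{2}\leq e^{2C_{\wt\rho}}(1+t)$ from \autoref{lem:tildeubound} yields $\phi+1\leq \|x\|^{2}/(c_{\fr{S}{H}}+t)\leq e^{2C_{\wt\rho}}(1+t)/(c_{\fr{S}{H}}+t)$, which is uniformly bounded in $t\geq 0$.

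The boundary case is the more delicate one. Inserting the explicit form $\wh h\left(\fr{\nu^{\Si}}{\|\nu^{\Si}\|},\fr{\nu^{\Si}}{\|\nu^{\Si}\|}\right)=(S^{2}-(\al^{2}+1)\|x\|^{2})q(x)/((\al^{2}+1)\|x\|^{3})$ from \eqref{eq:hathnunu} into \autoref{prop:timelikeboundaryspacelike}, the prefactor $S^{2}-(\al^{2}+1)\|x\|^{2}$ cancels and we obtain the clean identity
\[\nabla_{\mu^{\top}}S=-S\left[\fr{(S^{2}-\|x\|^{2})q(x)}{\|x\|^{3}}+\al H\right].\]
Combining this with $\nabla_{\mu^{\top}}\|x\|^{2}=-2\ip{\mu^{\top}}{x}=-2\al S$, which follows from $\ip{x}{\mu}=0$ and $\mu^{\top}=\mu-\al\nu$, a short calculation gives
\[\nabla_{\mu^{\top}}\phi=\fr{2\phi}{\|x\|}\left[-(\phi-1)q(x)+\al\sqrt\phi-\al H\|x\|\right].\]
At a boundary maximum $\nabla_{\mu^{\top}}\phi\geq 0$, so $(\phi-1)q(x)\leq|\al|(\sqrt\phi+H\|x\|)$; using \autoref{lem:HSbound} together with \eqref{eq:xtotime} to bound $H\|x\|\leq C\sqrt\phi$, and the strict convexity of $\Si$ to ensure $q\geq q_{\min}>0$, this becomes a quadratic inequality in $\sqrt\phi$ which bounds $\phi$ at boundary maxima by a constant depending only on $\al$, $\Si$, and the constants appearing in \autoref{lem:HSbound} and \autoref{lem:tildeubound}.

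Combining the interior and boundary bounds with $\sup_{M_{0}}\phi<\infty$ yields the desired uniform estimate $S<C\|x\|$. I expect the main obstacle to be the boundary analysis: the exact cancellation producing the term $-(S^{2}-\|x\|^{2})q(x)/\|x\|^{3}$ is what makes the leading contribution to $\nabla_{\mu^{\top}}\phi$ carry the correct sign, and the strict convexity of $\Si$ -- entering through the positive lower bound on $q$ -- is essential; without it one could a priori have a boundary maximum with $\phi$ arbitrarily large.
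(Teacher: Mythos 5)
Your boundary computation is close in spirit to the paper's, but your interior strategy contains a fatal flaw. You run the maximum principle on $\phi = S^2\|x\|^{-2}$ over all of $M_t$. In the Lorentzian case the cone $\cC$ strictly contains the lightcone, so $\overline{B_1(0)}\subset\Om$ and the graph $M_t = \{u(\xi)(\xi+e_3) : \xi\in\Om\}$ necessarily meets the lightcone along the interior circle $\{|\xi|=1\}$. At those points $\ip{x}{x}=0$, hence $\|x\|=0$ while $S>0$, so $\phi=+\infty$ and the interior space--time maximum of $\phi$ simply does not exist. Relatedly, your formulas $\ho\|x\|^2=4$ and $|\nabla\|x\|^2|^2=4(S^2-\|x\|^2)$ are only valid where $x$ is timelike; on the region $|\xi|>1$ (which includes the whole of $\del M_t$) one has $\|x\|^2=+\ip{x}{x}$, $\ho\|x\|^2=-4$ and $|\nabla\|x\|^2|^2=4(S^2+\|x\|^2)$. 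The paper sidesteps all of this by working with $S^2$ itself in the interior: $\ho S^2\leq 4-|\nabla S^2|^2/(2S^2)$ combined with the boundary estimate yields $S^2\leq C+C't$ on $\ov M_t$, and only on $\del M_t$ (where $\|x\|^2\sim(1+t)$ by \eqref{eq:xtotime}) is this converted into $S<C\|x\|$. This is what the proposition really asserts; read globally, $S<C\|x\|$ would be false on the lightcone.

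Your boundary step also imports \autoref{prop:timelikeboundaryspacelike} under the wrong hypothesis: that lemma is proved at a global maximum of $S$, and its derivation uses $\n_\ga S=0$ tangentially. At a boundary maximum of $\phi$ one instead has $\n_\ga S=S\ip{x}{\ga}/\|x\|^2\neq 0$ (unless $\phi=\al^2+1$), so you cannot quote the lemma verbatim; redoing the tangential step contributes an extra $-\al S/\|x\|^2$ to $\n_{\mu^\top}S/S$. There is also a sign slip: since $x$ is spacelike on $\Si$ here, $\n_{\mu^\top}\|x\|^2=+2\ip{\mu^\top}{x}=+2\al S$, not $-2\al S$. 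With both corrections the $\al\sqrt\phi$ in your displayed identity becomes $-2\al\sqrt\phi$; you still obtain a quadratic inequality in $\sqrt\phi$ after using $|\al|$, so the boundary conclusion survives, but the intermediate formulas as written are not correct. Once you replace the global use of $\phi$ by a bound on $S^2$ in the interior and make these boundary corrections, the argument aligns with the paper's.
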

\begin{proof}
We first estimate the maximum of $S$ a the boundary.
Suppose that $S^2\|x\|^{-2}>2(1+\al^2)$. By \eqref{eq:hathnunu} we have
\begin{flalign*}
\widehat{h}\left(\frac{\nu^\Sigma}{\|\nu^\Sigma\|},\frac{\nu^\Sigma}{\|\nu^\Sigma\|}\right) &= q(x)\|x\|^{-1}\left(\frac{S^2\|x\|^{-2}}{\al^2+1}-1\right)>c_\Si\|x\|^{-1}\left(\frac{S^2\|x\|^{-2}}{\al^2+1}-1\right).
\end{flalign*}
Therefore, at a boundary maximum, as a result of \autoref{prop:timelikeboundaryspacelike}, \autoref{lem:HSbound}, \autoref{lem:tildeubound}, and equation \eqref{eq:xtotime}
\begin{flalign*}
\nabla_{\mu^\top}S &\leq\! S\left(\frac{-c_\Si}{\|x\|}\left(\frac{S^2\|x\|^{-2}}{\al^2+1}-1\right)+\frac{S|\alpha|}{2C_{\frac{S}{H}}+2t}\right)\leq \!\frac{S}{\|x\|}\left(-c_\Sigma\left(\frac{S^2\|x\|^{-2}}{\al^2+1}-1\right)+C_1 S\|x\|^{-1}\right)
\end{flalign*}
for some $C_1=C_1\left(\alpha, c_{\frac{S}{H}}, C_{\frac{\|x\|^2}{1+t}}\right)$. Therefore $\n_{\mu^\top} S <0$ if $S\|x\|^{-1}>C_2$ for some $C_2=C_2(c_\Si,C_1)>\sqrt{2(1+\al^2)} $.  Using \eqref{eq:xtotime}, this implies that there exists a $C_3 = C_3(C_2,C_{\frac{\|x\|^2}{1+t}})$ such that any global maxima of $S^2$ at the boundary satisfy $S^2<C_3(1 +t)$.

We now deal with the interior maximum. Namely we have
\[\ho S \leq -|A|^2 S +2H\leq -\frac 1 2 H^2 S +2H\leq 2S^{-1},\]
so
\[\ho S^2 \leq 4 -\frac{|\n S^2|^2}{2S^2}.\]
As a result,  there exists a $C_4=C_4(M_0, C_3)$ such that on $\overline M_t$ we have $S^2\leq C_4+\max\{4, C_3\}t$ by the maximum principle. Therefore the claim follows by \autoref{lem:tildeubound} and \eqref{eq:xuLipschitzequiv}.
\end{proof}

\begin{cor}\label{cor:unifspacelike}
Suppose that the assumptions of \autoref{thm:maintheorem} hold and that $\Sigma$ is Lorentzian. Then there exists a $C_v>0$ depending only on $\alpha$, $\Sigma$ and the estimates in \autoref{lem:presspacelike}, \autoref{lem:HSbound} and \autoref{lem:tildeubound} such that for all $t\in[0,T)$, $v<C_v$.
\end{cor}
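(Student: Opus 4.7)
The plan is to bound $v$ pointwise on $M_{t}$ by combining the uniform support-function bound implicit in the proof of \autoref{prop:uniformspacelikeness} with an elementary geometric consequence of the Lorentzian hypothesis on $\Si$.

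Although \autoref{prop:uniformspacelikeness} is stated as $S<C\|x\|$, its proof actually yields the global pointwise bound $S^{2}\leq C_{4}+\max\{4,C_{3}\}t$ on all of $M_{t}$. Combined with $u^{2}=e^{2\rho}\sim 1+t$ from \autoref{lem:tildeubound}, this gives a global bound $Su^{-1}\leq C_{0}$ on $M_{t}$, where $C_{0}$ depends only on $\alpha$, $\Si$, and the constants from \autoref{lem:HSbound} and \autoref{lem:tildeubound}.

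Next I would extract the implication of $\si=1$ on the Euclidean geometry of $\Om$. The Lorentzian condition reads $(N\cdot\xi)^{2}<1$ on $\del\Om$, and by compactness there exists $R_{0}<1$ depending only on $\Si$ with $N\cdot\xi\leq R_{0}$ on $\del\Om$. Since $\Om$ is strictly convex with smooth boundary and contains the origin, every unit direction in $\bbR^{2}$ is realised as an outer normal to $\del\Om$ at some point; hence the Euclidean support function of $\Om$ is uniformly bounded above by $R_{0}$, so $\Om$ is contained in the open Euclidean disc of radius $R_{0}$. In particular $|\xi|\leq R_{0}<1$ for every $\xi\in\ov\Om$.

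The final step is then purely algebraic. Decomposing the future-timelike unit normal as $\nu=ve_{3}+W$ with $W\in\bbR^{2}$, the normalization $\ip{\nu}{\nu}=-1$ forces $|W|^{2}=v^{2}-1$, while expanding $S=-\ip{x}{\nu}$ via $x=u(\xi+e_{3})$ yields the identity $Su^{-1}=v-\xi\cdot W$. Combining the two previous bounds with Euclidean Cauchy--Schwarz gives
\begin{equation*}
v=Su^{-1}+\xi\cdot W\leq C_{0}+R_{0}\sqrt{v^{2}-1},
\end{equation*}
which is an elementary quadratic inequality in $v$; solving it produces the asserted bound $v\leq C_{v}$ for a constant depending only on $C_{0}$ and $R_{0}$.

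The only delicate point I anticipate in this plan is the second step: one must extract the strictly stronger Euclidean bound $|\xi|\leq R_{0}<1$ on $\ov\Om$ from the hypothesis $(N\cdot\xi)^{2}<1$ on $\del\Om$. Without it the Cauchy--Schwarz estimate would degenerate to a tautology and yield no bound on $v$. Everything else reduces to a routine combination of the position vector identity with the uniform pointwise bound on $Su^{-1}$ coming from \autoref{prop:uniformspacelikeness}.
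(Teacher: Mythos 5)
Your proof is correct and follows essentially the same route as the paper's: both reduce the bound to \autoref{prop:uniformspacelikeness} together with a Cauchy--Schwarz inequality exploiting the uniform timelikeness of the cone. Your identity $Su^{-1}=v-\xi\cdot W$ with $|\xi|\le R_0<1$ is exactly the paper's replacement of $\mu$ by $x\|x\|^{-1}$ in the argument of \autoref{lem:presspacelike}, written out in radial graph coordinates.
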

\begin{proof}
    For all $x\in\Sigma$, $-\ip{e_{3}}{x\|x\|^{-1}}\leq C(\Si)$, so by estimating literally as in the proof of \autoref{lem:presspacelike} but replacing $\mu$ with $x\|x\|^{-1}$ and $\alpha$ with the estimate $C(\Si)$ we see that $v<\wh{C}(\Si)S\|x\|^{-1}$. The estimate now follows from \autoref{prop:uniformspacelikeness}.
\end{proof}
\section{Proof of \autoref{thm:maintheorem}}
\label{sec:ProofOfTheorem}
We have the following improvement on \autoref{lem:HSbound}.
\begin{lemma} \label{lem:HSimprovement}
On the rescaled flow $\wt{M}_\tau$ we have
\[\left|\frac{\wt{H}}{\wt{S}}-\frac 1 2\right|\leq C e^{-\tau}.\]
\end{lemma}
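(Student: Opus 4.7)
The plan is to observe that this estimate is essentially a direct algebraic consequence of \autoref{lem:HSbound} after accounting for the rescaling.

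First, I would compute how $H$ and $S$ transform under $\wt{M}_\tau = (1+t)^{-1/2} M_t$. Since the unit normal is scale invariant, mean curvature scales inversely with length, giving $\wt{H} = (1+t)^{1/2} H$. For the support function $\wt{S} = -\langle \wt{x}, \wt{\nu}\rangle$, only the position vector rescales, so $\wt{S} = (1+t)^{-1/2} S$. Consequently,
\[
\frac{\wt{H}}{\wt{S}} = (1+t)\,\frac{H}{S}.
\]

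Next, I would apply \autoref{lem:HSbound}, which says that $\tfrac{S}{H} = 2t + f$ with $2c_{S/H} \leq f \leq 2C_{S/H}$ for some function $f$. Substituting,
\[
\frac{\wt{H}}{\wt{S}} - \frac{1}{2} = \frac{1+t}{2t+f} - \frac{1}{2} = \frac{2-f}{2(2t+f)}.
\]
The numerator is uniformly bounded since $f \in [2c_{S/H}, 2C_{S/H}]$, and $2t + f \geq 2c_{S/H}$ is bounded below. Moreover the ratio $\tfrac{1+t}{2t+f}$ is uniformly bounded on $[0,\infty)$ (it attains its maximum at $t=0$ with value $1/f(0) \leq 1/(2c_{S/H})$ and tends to $1/2$). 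Therefore
\[
\left|\frac{\wt{H}}{\wt{S}} - \frac{1}{2}\right| = \frac{|2-f|}{2(2t+f)} \leq \frac{C}{1+t} = C e^{-\tau},
\]
for a constant $C$ depending only on $c_{S/H}$ and $C_{S/H}$.

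There is no genuine obstacle here: the key input—the fact that $\tfrac{S}{H} - 2t$ is globally bounded—was already established in \autoref{lem:HSbound}, whose proof exploited that $H$ and $S$ satisfy identical linear evolution equations and identical linear oblique boundary conditions (by \autoref{lem:1sttime} and \autoref{lem:Sboundary}). The present lemma simply translates that affine-in-$t$ control of $S/H$ into a sharp $O(e^{-\tau})$ control of the rescaled speed quotient.
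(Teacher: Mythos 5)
Your proof is correct, and it takes a genuinely different, more elementary route than the paper's. You observe that the bound is purely algebraic once one has \autoref{lem:HSbound}: writing $f=\frac SH-2t$, that lemma gives $f\in[2c_{\frac{S}{H}},2C_{\frac{S}{H}}]$, and since $\frac{\wt H}{\wt S}-\frac 12=(1+t)\frac HS-\frac 12=\frac{2-f}{2(2t+f)}$, the numerator is bounded and $2t+f\ge 2\min\{1,c_{\frac{S}{H}}\}(1+t)$, which yields the claimed $O(e^{-\tau})$ rate directly. The paper instead sets $\phi=(1+t)\frac HS-\frac 12$, derives its evolution $\ho\phi=-2\frac HS\phi+2\ip{\frac{\n S}{S}}{\n\phi}$, uses \autoref{lem:1sttime} and \autoref{lem:Sboundary} to get $\n_{\mu^\top}\phi=0$ on the boundary, and then runs a weighted maximum-principle argument on $\phi^2(C_{\frac{S}{H}}+t)^2$, invoking \autoref{lem:HSbound} only to bound the damping coefficient $-4\frac HS$ from above by $-\frac{2}{C_{\frac{S}{H}}+t}$. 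Both approaches give the same decay rate with constants depending only on the initial data. Your route is shorter because it treats \autoref{lem:HSbound} as a black box and needs no further analytic machinery; the paper's argument is essentially a weighted re-run of the maximum-principle argument that already proved \autoref{lem:HSbound}, which keeps the exposition parallel to that lemma but does not buy anything sharper in this instance.
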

\begin{proof}
By scaling properties we have $\frac{\wt{H}}{\wt{S}} =(1+t)\frac{H}{S}$. Using the above evolutions gives
\[\ho (1+t)\frac{H}{S}=-2\frac{H}{S}\left((1+t)\frac H S-\frac{1}{2}\right)+2\ip{\frac{\n S}{S} }{\n \frac{H}{S}}.\]
Writing $\phi= (1+t)\frac{H}{S}-\frac 1 2$,
\[\ho \phi=-2\frac{H}{S}\phi+2\ip{\frac{\n S}{S} }{\n \phi}.\]
Using \autoref{lem:HSbound}, 
\begin{flalign*}
\ho \phi^2&=-4\frac{H}{S}\phi^2+\ip{4\phi\frac{\n S}{S}-2\n\phi }{\n \phi}\leq -\frac{2}{C_{\frac{S}{H}}+t}\phi^2+2\ip{\frac{\n S}{S} }{\n \phi^2}, 
\end{flalign*}
and therefore
\begin{flalign*}\ho \phi^2(C_{\frac{S}{H}}+t)^2&\leq 2(C_{\frac{S}{H}}+t)\phi^2-2(C_{\frac{S}{H}}+t)\phi^2+2\ip{\frac{\n S}{S} }{\n (C_{\frac{S}{H}}+t)^2\phi^2}\\
&=2\ip{\frac{\n S}{S} }{\n (C_{\frac{S}{H}}+t)^2\phi^2}.
\end{flalign*}
Applying the maximum principle leads to
\[\left|(1+t)\frac{H}{S}-\frac 1 2\right|\leq \frac{C(M_0)}{C_{\frac{S}{H}}+t}.\]
\end{proof}

We define the $k^\text{th}$ compatibility condition by
\[\frac{d^k}{dt^k}{b}(x,D\wt{\rho})\Big|_{t=0}=0,\]
where ${b}$ is as in \eqref{eq:MCFrescaledrho}.

We now prove the following which implies \autoref{thm:maintheorem}.
\begin{thm}\label{thm:maintheoremPDE}
Suppose that $\Sigma$ is the non-degenerate boundary of a strictly convex cone $\cC \subset \mathbb{R}^3_1$ and that $M_0\subset \ov{\mathcal{C}} $ is strictly mean convex,  spacelike and graphical with graph $\wt{\rho}_0$. Suppose furthermore that $\wt{\rho}_0$ satisfies the boundary condition and all compatibility conditions up to $k^\text{th}$ order. Then: 
\begin{enumerate}
\item A unique bounded solution 
\[\wt{\rho}\in C^{2k+\alpha;\frac{2k+\alpha}{2}}(\Omega\times [0,\infty)))\cap C^\infty(\Omega \times (0,\infty))\] of \eqref{eq:MCFrescaledrho} exists for all rescaled time $\tau$. The solution is bounded and given any fixed $\ep>0$, we have uniform $C^{l;\frac{l}2}(\Omega\times [\ep,\infty))$ for all $0\leq l\in \mathbb{N}$.
\item  The solution $\wt{\rho}$ converges uniformly and smoothly to a piece of an expanding solution to MCF satisfying the boundary conditions as $t \ra \infty$.
\end{enumerate}
\end{thm}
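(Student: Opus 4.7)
The plan is to combine the a~priori estimates from Sections~\ref{sec:InitialEstimates}--\ref{sec:LorentzCone} into uniform parabolicity and obliqueness of \eqref{eq:MCFrescaledrho}, invoke standard nonlinear parabolic theory with oblique boundary conditions to obtain long-time existence together with uniform smoothness estimates, and finally extract a self-similar expander limit using the exponential decay provided by \autoref{lem:HSimprovement}.

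\textbf{Step 1 (uniform parabolicity and obliqueness).} First I would assemble the bounds needed to feed into \autoref{prop:parabolicityobliqueness}. The estimate $c_{\wt\rho}\le\wt\rho\le C_{\wt\rho}$ is \autoref{lem:tildeubound}; the estimate $v\le C_v$ comes from \autoref{lem:presspacelike} when $\Sigma$ is Riemannian and from \autoref{cor:unifspacelike} when $\Sigma$ is Lorentzian; and the graphicality estimate $Su^{-1}\ge c_{Su^{-1}}>0$ is \autoref{prop:UniformGraphSpacelikeCone} in the Riemannian case, while in the Lorentzian case it follows automatically from $S\ge\|x\|$ (see the proof of \autoref{lem:presgraph}) combined with \eqref{eq:xuLipschitzequiv}. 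Inserting these into \autoref{prop:parabolicityobliqueness} yields uniform parabolicity of $a^{ij}$ and uniform obliqueness of the boundary operator $b$ in \eqref{eq:MCFrescaledrho} for all $\tau\ge 0$.

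\textbf{Step 2 (regularity, uniqueness, long-time existence).} With Step~1 in hand I would appeal to the standard quasilinear parabolic theory with nonlinear oblique boundary conditions (e.g.\ the combined Krylov--Safonov boundary Hölder estimates and Schauder-type regularity for oblique problems in Lieberman's monograph). Uniform $C^0$ bounds on $D\wt\rho$ together with uniform obliqueness give uniform $C^{1+\alpha;(1+\alpha)/2}$ estimates; differentiating \eqref{eq:MCFrescaledrho} and iterating linear Schauder theory for oblique problems then bootstraps to uniform $C^{l;l/2}(\ov\Omega\times[\varepsilon,\infty))$ bounds for every $l\in\bbN$ and $\varepsilon>0$. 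The $k$-th order compatibility hypothesis supplies exactly the matching at the parabolic corner $\partial\Omega\times\{0\}$ needed to propagate the $C^{2k+\alpha;(2k+\alpha)/2}$ estimate up to $t=0$. A standard continuation argument rules out finite-time breakdown and yields existence for all $\tau\ge 0$, while uniqueness among bounded solutions follows from the comparison principle for uniformly parabolic, uniformly oblique problems.

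\textbf{Step 3 (convergence and the self-similar limit).} By \eqref{eq:rhot} and \eqref{eq:tilderhodef}, $\wt\rho_\tau = (1+t)H/S - \tfrac12 =: \phi$, and \autoref{lem:HSimprovement} gives $|\phi|\le Ce^{-\tau}$. Integrating in $\tau$,
\[ |\wt\rho(\xi,\tau_2)-\wt\rho(\xi,\tau_1)| \;\le\; C e^{-\tau_1} \qquad \text{for all }\tau_2>\tau_1, \]
so $\wt\rho(\cdot,\tau)$ is Cauchy in $C^0(\ov\Omega)$ and converges uniformly to some $\wt\rho_\infty$. Combining with the uniform higher regularity from Step~2 and Arzelà--Ascoli upgrades this to smooth convergence, so $\wt M_\tau\to\wt M_\infty$, where $\wt M_\infty$ is a smooth, spacelike, graphical, strictly mean-convex $\alpha$-capillary surface in $\cC$. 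Since $\phi\to 0$, the limit satisfies $\wt H_\infty = \tfrac12\wt S_\infty = -\tfrac12\langle \wt x_\infty,\wt\nu_\infty\rangle$, the fixed-point equation of the rescaled flow; equivalently $M_t = \sqrt{1+t}\,\wt M_\infty$ is a self-similarly expanding MCF solution satisfying the prescribed $\alpha$-capillary boundary condition, proving both claims of \autoref{thm:maintheorem}. The main technical hurdle is Step~2: one must verify that the structural hypotheses of the oblique nonlinear parabolic regularity theory apply uniformly in $\tau$ and that the compatibility structure at the corner matches the regularity assumed of $\wt\rho_0$. Step~1 has been engineered precisely to make this work, but the bookkeeping of the dependencies of the parabolicity and obliqueness constants must be done carefully.
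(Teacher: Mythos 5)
Your proposal matches the paper's proof essentially step-for-step: Step~1 is the same assembly of the a~priori bounds into \autoref{prop:parabolicityobliqueness} (including the Lorentzian shortcut $Su^{-1}\ge c>0$ via timelike vector properties and \eqref{eq:xuLipschitzequiv}); Step~2 is the same reduction to quasilinear oblique parabolic theory (the paper cites Gerhardt, Ladyzenskaya--Solonnikov--Ural'tseva, Lieberman and Schn\"urer, but the Krylov--Safonov/Schauder bootstrap you describe is the same mechanism), with the same use of the compatibility conditions at the parabolic corner and the same continuation argument; and Step~3 is the same exponential-decay-plus-interpolation convergence from \autoref{lem:HSimprovement}. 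No genuine gap or alternative route to flag.
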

\begin{proof}
The first part of this  follows from uniform parabolicity and obliqueness estimates and an application of standard theory, as in \cite[Section 2.6 and 6.1]{Schnuerer2002}. We now give more details. 

One obtains short time existence as in \cite[Theorem 2.5.7, page 106]{Gerhardt:/2006} where we also linearise the boundary conditions in an identical way and use \cite[Theorem 5.3, page 320]{LadyzenskayaSolonnikovUraltseva:/1968} to ensure suitable existence and estimates on the linearised problem for $\tau\in[0,\wt{T}_1)$ and some small $\wt{T}_1>0$. We now suppose that $\wt{T}<\infty$ is the maximal time interval on which a graphical solution exists.

In case $\si=-1$, the requirements for uniform parabolicity and obliqueness of \eqref{eq:MCFrescaledrho} in \autoref{prop:parabolicityobliqueness} have been shown to be satisfied in  \autoref{lem:presspacelike}, \autoref{prop:UniformGraphSpacelikeCone} and \autoref{lem:tildeubound} respectively.

For the case $\si=1$, the first condition in \autoref{prop:parabolicityobliqueness} follows from  \autoref{cor:unifspacelike}. Properties of timelike vectors imply that there is a constant $C=C(\Sigma)>0$ such that $Su^{-1}\geq-C\ip{\nu}{x\|x\|^{-1}}\geq C>0$ and so the second condition in \autoref{prop:parabolicityobliqueness} is satisfied. The final condition in \autoref{prop:parabolicityobliqueness} is satisfied due to  \autoref{lem:tildeubound}, and so uniform parabolicity and obliqueness follows.

Furthermore, using these estimates we also have
\[c_{\wt{\rho}}\leq \wt{\rho}\leq C_{\wt{\rho}} \qquad\text{ and }\qquad |D\wt{\rho}|^2<\frac{v^2}{S^2u^{-2}}<C_v^2c_{Su^{-1}}^{-2}.\]
By Nash--Moser--De Giorgi PDE estimates \cite[Lemma 13.22, p353]{Lieberman:/1998} we now have uniform estimates in $C^{1+\be;\frac{1+\be}{2}}$, and, as in \cite[Section 2.6 and 6.1]{Schnuerer2002}, we may bootstrap further to obtain uniform $C^{k+\be;\frac{k+\be}{2}}(\Omega\times[0, \wt{T}))$ (for $k$ as in the statement of the theorem). Furthermore, for any $\ep>0$, we also have estimates on $C^{l+\be;\frac{l+\be}{2}}(\Omega\times[\ep, \wt{T}))$ depending only on $\ep$ and $l$. As a result we may smoothly extend our solution to the interval $[\ep,\wt{T}]$ with $u(\wt{T})$ satisfying compatibility conditions to all orders. Therefore we may apply short time existence again to contradict the maximality of $\wt{T}$. Therefore $\wt{T}=\infty$ and we have the claimed smooth estimates.

By  \autoref{lem:HSimprovement} we have exponential decay of $\wt{\rho}_\tau = \frac{\wt{H}}{\wt{S}}-\frac 1 2$ and so there exists a subsequence of times such that $\wt{\rho}$ converges to a solution of the stationary equation $\wt{H} = \frac{\wt{S}}{2}$ which we will write $\wt{\rho}_\infty$. However, by integration we see that for $\tau_1,\tau_2>0$, \[|\wt{\rho}(x,\tau_1)-\wt{\rho}(x,\tau_2)|<Ce^{-\min\{\tau_1,\tau_2\}},\]
 and taking the same subsequential limit in $\tau_2$, 
 \[|\wt{\rho}(x,\tau)-\wt{\rho}_\infty|<Ce^{-\tau},\]
 implying full uniform convergence of the rescaled equation. Smooth convergence now follows from our smooth estimates and interpolation.
\end{proof}

\providecommand{\bysame}{\leavevmode\hbox to3em{\hrulefill}\thinspace}
\providecommand{\MR}{\relax\ifhmode\unskip\space\fi MR }
\providecommand{\MRhref}[2]{%
  \href{http://www.ams.org/mathscinet-getitem?mr=#1}{#2}
}
\providecommand{\href}[2]{#2}

\end{document}